\newtheorem{theorem}{Theorem}[section]
\newtheorem{proposition}[theorem]{Proposition}
\newtheorem{corollary}[theorem]{Corollary}
\theoremstyle{definition}
\newtheorem{definition}[theorem]{Definition}
\newtheorem{notation}[theorem]{Notation}
\newtheorem{example}[theorem]{Example}
\newtheorem{remark}[theorem]{Remark}
\numberwithin{equation}{section}
\newcommand{\fraction}[3][3]{\mbox{$#2$\makebox[0pt][l]{\mbox{$\mathrm{/}$}}\hspace*{#1 pt}\mbox{$\mathrm{/}$}\hspace{0.2pt}$#3$}}
\newcommand{\Mod}{\operatorname{Mod}}
\renewcommand{\O}{\operatorname{{\mathcal O}}}
\author{Kashyap Rajeevsarathy}
\address{Department of Mathematics\\
Indian Institute of Science Education Research Bhopal\\
ITI (Gas Rahat) Building, Govindpura\\
Bhopal - 462023, Madhya Pradesh\\
India}
\urladdr{home.iiserb.ac.in/$_{\widetilde{\phantom{n}}}$kashyap/}
\email{kashyap@iiserb.ac.in}
\date{\today}
\keywords{surface, mapping class, Dehn twist, nonseparating curve, root, fractional power}
\begin{document}

\title[Fractional powers of Dehn twists about nonseparating curves]
{Fractional powers of Dehn twists about nonseparating curves}

\begin{abstract}
Let $S_g$ be a closed orientable surface of genus $g \geq 2$ and $C$ a simple closed nonseparating curve in $F$. Let $t_C$ denote a left handed Dehn twist about $C$. A \textit{fractional power} of $t_C$ of \textit{exponent} $\fraction{\ell}{n}$ is an $h \in \Mod(S_g)$ such that $h^n = t_C^{\ell}$. Unlike a root of a $t_C$, a fractional power $h$ can exchange the sides of $C$. We derive necessary and sufficient conditions for the existence of both side-exchanging and side-preserving fractional powers. We show in the side-preserving case that if $\gcd(\ell,n) = 1$, then $h$ will be isotopic to the $\ell^{th}$ power of an $n^{th}$ root of $t_C$ and that $n \leq 2g+1$. In general, we show that $n \leq 4g$, and that side-preserving fractional powers of exponents $\fraction{2g}{2g+2}$ and $\fraction{2g}{4g}$ always exist. For a side-exchanging fractional power of exponent $\fraction{\ell}{2n}$, we show that $2n \geq 2g+2$, and that side-exchanging fractional powers of exponent $\fraction{2g+2}{4g+2}$ and $\fraction{
4g+1}{4g+2}$ always exist. We give a complete listing of certain side-preserving and side-exchanging fractional powers on $S_5$.
\end{abstract}

\maketitle

\section{Introduction}
\label{sec:intro}

Let $S_g$ be a closed orientable surface of genus $g \geq 2$ and $C$ be a simple closed nonseparating curve in $S_g$. Let $t_C$ denote a left handed Dehn twist about $C$ and let $\Mod(S_g)$ denote the mapping class group of $S_g$.

A \textit{root of $t_C$ of degree $n$} is an $h \in \Mod(S_g)$ such that $h^n = t_C$. In 2008, D. Margalit and S. Schleimer \cite{MS} showed the existence of
degree $2g+1$ roots of a Dehn twist $t_C$ on $S_{g+1}$ (for $g \geq 1$). In an earlier collaborative work with D. McCullough~\cite{MK1}, we derived necessary and sufficient
conditions for the existence of a root of degree $n$. The geometric construction of a root of degree $n$ of $t_C$ on $S_g$ started with the definition of $C_n$-action on $S_g$ with fixed points $P$ and $Q$ so that the rotation angles induced by the action around these points differ by $2\pi/n$. We then remove invariant  disks around $P$ and $Q$ and attach an annulus $N$, extending the restricted homeomorphism over $N$ using a homeomorphism whose $n^{th}$ power is a full twist
of $N$. Using Thurston's orbifold theory~\cite{T1} (see also~\cite{S1}) and some elementary number theory an equivalent algebraic theory of roots was developed that completely captured this geometric construction. A natural question is whether this theory can be extended to $n^{th}$ roots of  $\ell^{th}$ powers of $t_C$ and whether such roots could possess some additional properties. We will call such a root $h$ a \textit{fractional power} of $t_C$ of \textit{exponent} $\fraction{\ell}{n}$.

\begin{definition}
A \textit{fractional power} of $t_C$ of \textit{exponent} $\fraction{\ell}{n}$ is an $h \in \Mod(S_g)$ such that $h^n = t_C^{\ell}$.
\end{definition}

In particular, a root of $t_C$ of degree $n$ is just a fractional power of exponent $\fraction{1}{n}$. In this paper, we will describe the geometric construction of a fractional power of $t_C$ of exponent $\fraction{\ell}{n}$.  While this construction is fairly straightforward, the main mathematics of the paper is in the extension of the algebraic theory of roots to the case of fractional powers so that it describes their geometric construction. This algebra along with a simple calculus enables us to obtain several qualitative and quantitative results on fractional powers, including their enumeration. The use of the notation $\fraction{\ell}{n}$ instead of $\ell/n$ is for the reason that fractional powers of exponent $\fraction{\ell}{n}$, where $\ell\mid n$ can exist, while powers of exponent $\fraction{1}{(n/\ell)}$ do not. For example, there always exists a fractional power of $t_C$ of exponent $\fraction{2g}{4g}$ in $\Mod(S_{g+1})$ (see Remark~\ref{rem:ub-SP}), but we know from ~\cite{MK1} that a square root of $t_C$ cannot exist.  

Let $h$ be a fractional power of $t_C$ of exponent $\fraction{\ell}{n}$. As in the case of a root of $t_C$, $h$ would also preserve $C$,
which is apparent from the following argument. Since $t_C^{\ell}= ht_C^\ell h^{-1}=t_{h(C)}^\ell$, $h(C)$ is isotopic to $C$, and by isotopy,
we may assume that $h(C)=C$.  We showed in \cite{MK1} that no root of $t_C$ can exchange the two sides of $C$. However, an intriguing fact
about fractional powers of $t_C$ is that they can exchange the sides of $C$, which motivates the following definition.

\begin{definition}
A fractional power is \textit{side-exchanging} if it interchanges the two sides of $C$, and \textit{side-preserving}, otherwise.
\end{definition}

Since $h$ is a root of degree $n$ of $t_C^\ell$, $t_C^rh$ is a root of degree $n$ of $t_C^{\ell + rn}$. We may assume that $\ell \neq n$.
For if $h^n = t_C^n$ then $h$ fixes $C$ up to isotopy and commutes with $t_C$. So ${(ht_C^{-1})}^n = 1$ and $h = k t_C$ for some finite order
homeomorphism $k$ with $k(C)=C$. In other words, $h$ is a trivial modification of a $\fraction{0}{n}$-root that preserves $C$. Consequently, we need only to understand the fractional powers of
$t_C$ having $1 \leq \ell < n$, and we will generally assume that $\ell$ lies in this range.

The main result in both the side-preserving and side-exchanging cases will be proved using Thurston's orbifold theory. We know from~\cite{MK1} that any root of $t_C$ is side-preserving. So the theory of roots derived in~\cite{MK1} naturally extends to the case of side-preserving fractional powers. As in the geometric construction of roots, we define a $C_n$-action on $S_g$ that has two distinguished fixed points $P$ and $Q$. However, the only difference is
that the rotation angles at $P$ and $Q$ have to differ by $2\pi\ell/n$, and the twisting on annulus $N$ is through an angle $2\pi \ell/n$ rather than $2\pi/n$. The quotient orbifold of the $C_n$-action has two distinguished cone points of order $n$. In Section~\ref{sec:SP}, we define an abstract tuple called an \textit{SP data set}, which is an extension of the data set in~\cite{MK1}. An SP data set, in addition to holding the essential algebraic information required to describe the quotient orbifold action, also holds information required that determines the geometric construction a root. The main theorem in Section~\ref{sec:SP}
(Theorem~\ref{thm:SP-main}) asserts that conjugacy classes of side-preserving fractional powers correspond to SP data sets. An interesting consequence of this theorem is the following proposition.
{
\renewcommand{\thetheorem}{\ref{coro:SP-powerofroot}}
\begin{proposition}
Let $h$ be a side-preserving fractional power of $t_C$ of exponent $\fraction{\ell}{n}$ such that $\gcd(\ell,n)=1$. Then $h = {(h')}^\ell$ for some root $h'$ of~$t_C$ of degree $n$.
\end{proposition}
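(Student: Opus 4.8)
The plan is to exhibit the desired root $h'$ explicitly by an elementary computation in $\Mod(S_g)$, the one structural input being that $h$ commutes with $t_C$. First I would record this commutation. As was already observed for fractional powers, $h$ fixes $C$ up to isotopy, so conjugating $t_C$ by $h$ gives $h t_C h^{-1} = t_{h(C)} = t_C$; hence $h$ and $t_C$ commute as elements of $\Mod(S_g)$.

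With commutativity in hand, the root is produced by B\'ezout's identity. Because $\gcd(\ell,n)=1$, I would choose integers $a,b$ with $a\ell+bn=1$ and set $h' := h^{a} t_C^{\,b}$. Using $h t_C = t_C h$ and the defining relation $h^{n}=t_C^{\ell}$, I compute
\[
(h')^{n} = h^{an} t_C^{\,bn} = (h^{n})^{a} t_C^{\,bn} = t_C^{\,a\ell} t_C^{\,bn} = t_C^{\,a\ell+bn} = t_C,
\]
so $h'$ is a root of $t_C$ of degree $n$. It then remains to check that its $\ell$-th power recovers $h$; again using commutativity together with $a\ell = 1-bn$,
\[
(h')^{\ell} = h^{a\ell} t_C^{\,b\ell} = h\,(h^{n})^{-b} t_C^{\,b\ell} = h\, t_C^{-b\ell} t_C^{\,b\ell} = h,
\]
which is exactly the assertion.

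The computation is routine; the real content is the choice of $h'$ and the verification that it is a genuine degree-$n$ root of $t_C$ rather than merely a root of some power of $t_C$, and this is precisely where the coprimality $\gcd(\ell,n)=1$ enters, through the solvability of $a\ell+bn=1$. Here $a$ plays the role of $\ell^{-1}\bmod n$, so that $h^{a}$ carries the rotation data of $h$ back to that of a degree-$n$ root, while the twist correction $t_C^{\,b}$ tunes the $n$-th power to be exactly $t_C$. In the language of Theorem~\ref{thm:SP-main}, $h'$ is the side-preserving power whose SP data set is the unique $\ell$-th root of the SP data set of $h$, coprimality guaranteeing that this root exists and is unique; the group-theoretic construction above is the concrete shadow of that correspondence, which is why the proposition is naturally read off from the theorem.

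I do not expect a serious obstacle so much as a point to verify at the end: $h'$ is automatically side-preserving, being a product of powers of the side-preserving maps $h$ and $t_C$, which is consistent with the fact that every root of $t_C$ is side-preserving by~\cite{MK1}. This consistency check also explains why the side-preserving hypothesis on $h$ is genuinely necessary: since $(h')^{\ell}$ inherits side-preservation from the root $h'$, a side-exchanging power can never be expressed as a power of a root, so the conclusion simply cannot hold in the side-exchanging case.
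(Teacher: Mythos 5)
Your proof is correct, and it reaches the conclusion by a genuinely different and more elementary route than the paper's. The paper argues through Theorem~\ref{thm:SP-main}: it takes the SP data set $((\ell,n),g_0,(a,b);(k_1,n_1),\dots,(k_m,n_m))$ describing $h$, replaces $\ell$ by $1$ while multiplying $a$, $b$ and the $k_i$ by $\ell$ (coprimality is what keeps condition (ii) intact), obtains a data set of exponent $\fraction{1}{n}$ and hence a degree-$n$ root $h'$ living over the same quotient orbifold, and then identifies $h$ with $(h')^{\ell}$ by comparing the two covering transformations on the complement of the annulus and the twisting on $N$. You instead use the single structural input that $h$ commutes with $t_C$ (which the paper records in the introduction) and write the root down explicitly as $h'=h^{a}t_C^{\,b}$ with $a\ell+bn=1$; the identities $(h')^{n}=t_C$ and $(h')^{\ell}=h$ are then immediate group computations, and your verification of both is complete and correct. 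Your route is shorter, needs neither Theorem~\ref{thm:SP-main} nor any orbifold theory, and specifies $h'$ by a closed formula rather than by matching generators of a cyclic group; what the paper's route buys in exchange is the explicit SP data set of the resulting root (useful for the enumerations later in Section~\ref{sec:SP}) and the geometric picture of $h'$ as a covering transformation. One consequence of your argument is worth flagging: side-preservation enters nowhere in the computation, so applied verbatim to any $h$ with $h^{N}=t_C^{\ell}$ and $\gcd(\ell,N)=1$ it manufactures a genuine degree-$N$ root of $t_C$; since roots of $t_C$ have odd degree by~\cite{MK1}, this yields the stronger statement that a side-exchanging fractional power of exponent $\fraction{\ell}{2n}$ with $\gcd(\ell,2n)=1$ cannot exist at all --- a sharper claim than the one in your closing remark, and one that should be reconciled with the odd-$\ell$ side-exchanging examples listed in Section~\ref{sec:SE}.
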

}
\noindent In other words, if $\gcd(\ell,n) = 1$, then a fractional power is essentially the $\ell^{th}$ power of a root degree $n$.
Among other direct applications of Theorem~\ref{thm:SP-main}, is the following corollary.
{
\renewcommand{\thetheorem}{\ref{coro:SP-relupperbound}}
\begin{corollary}
Suppose that $h$ is a side-preserving fractional power of $t_C$ of exponent $\fraction{\ell}{n}$. Then
\begin{itemize}
\item[(a)] $n$ is odd if $\ell$ is odd.
\item[(b)] $n \leq 2g+1$ if $\gcd(\ell,n) = 1$.
\end{itemize}
\end{corollary}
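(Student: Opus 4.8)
The plan is to read off both statements directly from the SP data set attached to $h$ by Theorem~\ref{thm:SP-main}. Recall that this data set encodes the $C_n$-action whose quotient orbifold carries the two distinguished cone points of order $n$ arising from the fixed points $P$ and $Q$; in particular it records the two local rotation numbers $c_1$ and $c_2$ at $P$ and $Q$. Since $P$ and $Q$ are cone points of order \emph{exactly} $n$, each $c_i$ is a unit modulo $n$, so that $\gcd(c_1,n)=\gcd(c_2,n)=1$. Moreover, the defining requirement that the rotation angles at $P$ and $Q$ differ by $2\pi\ell/n$ translates into the congruence $c_1-c_2\equiv\ell\pmod n$. These two facts are the only inputs I would need.

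For part (a) I would argue contrapositively, showing that $n$ even forces $\ell$ even. If $n$ is even, then any unit modulo $n$ is coprime to $2$, hence odd; thus $c_1$ and $c_2$ are both odd and $c_1-c_2$ is even. Because $2\mid n$, reducing $c_1-c_2\equiv\ell\pmod n$ modulo $2$ yields $\ell\equiv c_1-c_2\equiv 0\pmod 2$, so $\ell$ is even. Contrapositively, if $\ell$ is odd then $n$ is odd. (The same computation goes through verbatim should the convention instead record the relation as $c_1+c_2\equiv\ell$, since a sum of two odd integers is again even, so the conclusion is insensitive to the orientation sign.)

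For part (b) I would invoke Proposition~\ref{coro:SP-powerofroot}: when $\gcd(\ell,n)=1$ it provides a root $h'$ of $t_C$ of degree $n$ with $h=(h')^\ell$. Hence the existence of $h$ forces the existence of a degree-$n$ root of $t_C$ on the same surface $S_g$, and the desired inequality $n\le 2g+1$ is then exactly the upper bound on the degree of a root of a Dehn twist about a nonseparating curve that follows from the classification of roots in~\cite{MK1}. This completes the reduction.

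The routine mod-$2$ computation in part (a) is not where the difficulty lies; the substantive content is the translation in the first paragraph, that the two rotation numbers are units modulo $n$ constrained by the $2\pi\ell/n$ difference, after which everything is immediate. The main obstacle I anticipate is making the reduction airtight: for (a) I must be sure the SP data set genuinely records $c_1,c_2$ as units together with the stated congruence (and not merely their residues up to an ambiguity that could spoil the reduction modulo $2$), and for (b) I must confirm that the root furnished by Proposition~\ref{coro:SP-powerofroot} lives on $S_g$ itself, so that the degree bound of~\cite{MK1} applies without any shift in genus.
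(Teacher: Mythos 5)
Your proof is correct, and part (a) is essentially the paper's own argument: the paper reads the congruence $a+b\equiv \ell ab\bmod n$ from the SP data set, notes that $a$ and $b$ are units modulo $n$ and hence odd when $n$ is even, and concludes $\ell$ is even modulo $2$. Your version phrases this with the rotation numbers and a difference congruence rather than with $a+b\equiv\ell ab$, but as you anticipate, the mod-$2$ computation is insensitive to this (the extra factor $ab$ is odd), so the two are the same argument. For part (b), however, you take a genuinely different route. The paper argues directly with the data set: assuming $n>2g+1$, Equation~(2.1) forces $g_0=0$ and $m=1$ with $n_1<n$, and then condition (iv) gives $a+b\equiv 0\bmod d$ for $d=n/n_1>1$, which contradicts condition (iii) when $\gcd(\ell,n)=1$. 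You instead invoke Proposition~\ref{coro:SP-powerofroot} to produce a degree-$n$ root $h'$ of $t_C$ on the same surface and then quote the bound $n\leq 2g+1$ for roots from~\cite{MK1}. This reduction is legitimate (the proposition precedes the corollary and is proved independently, and $h'$ does live on $S_{g+1}$ with the same nonseparating $C$, so the cited bound applies with no genus shift); it is conceptually cleaner in that it explains why the bound coincides with the root bound, but it imports the inequality from~\cite{MK1}, whereas the paper's computation is self-contained in the data-set formalism and in fact reproves that bound as the case $\ell=1$.
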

}
\noindent Corollary~\ref{coro:SP-relupperbound} gives an upper bound for $n$ when $\ell$ and $n$ are relatively prime. In the following corollary, we also derive a general upper and lower bound for $n$.
{
\renewcommand{\thetheorem}{\ref{coro:SP-absupperbound}}
\begin{corollary}
Suppose that $h$ is a side-preserving fractional power of $t_C$ of exponent $\fraction{\ell}{n}$ whose conjugacy class is given by the SP data set
$D = ((\ell,n),g_0,(a,b);(c_1,n_1),\dots,(c_m,n_m))$. Then \[\frac{2g+m}{2g_0+m} \leq n \leq \frac{4g}{4g_0+m}\ .\]
\end{corollary}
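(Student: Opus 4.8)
The plan is to extract a single Riemann--Hurwitz identity from the data set and then split off the two inequalities by elementary estimates on the cone-point orders. By Theorem~\ref{thm:SP-main}, the data set $D$ is realized by a cyclic action of order $n$ whose quotient orbifold $\mathcal{O}$ has underlying genus $g_0$, two distinguished cone points of order $n$ arising from the fixed points $P$ and $Q$ (recorded by the pair $(a,b)$), and $m$ additional cone points of orders $n_1,\dots,n_m$. The defining relation of an SP data set is precisely the multiplicativity of the orbifold Euler characteristic for this branched cover, and I would rewrite it in the form
\[
\frac{2g}{n} = 2g_0 + \sum_{i=1}^m \left(1 - \frac{1}{n_i}\right).
\]
This identity is the whole engine of the proof; the denominators $2g_0+m$ and $4g_0+m$ are automatically positive, since the empty case $g_0 = m = 0$ would force $g = 0$, contradicting $g \geq 2$.

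The only structural input I would isolate is that each $n_i$ is the order of a cone point of an order-$n$ cyclic action, so $n_i \mid n$ and $n_i \geq 2$; hence $2 \leq n_i \leq n$ for all $i$. For the upper bound I use the lower estimate $1 - 1/n_i \geq 1/2$, which gives $\sum_{i=1}^m(1-1/n_i) \geq m/2$; substituting into the identity yields $2g/n \geq 2g_0 + m/2 = (4g_0+m)/2$, and clearing denominators gives $n \leq 4g/(4g_0+m)$. For the lower bound I instead use the upper estimate $1 - 1/n_i \leq 1 - 1/n$, which gives $\sum_{i=1}^m(1-1/n_i) \leq m(1-1/n) = m - m/n$; substituting yields $2g/n \leq 2g_0 + m - m/n$, and multiplying through by $n$ and rearranging gives $2g + m \leq n(2g_0+m)$, that is, $n \geq (2g+m)/(2g_0+m)$.

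The step I expect to require the most care is the first: confirming that the two distinguished points contribute cone points of order exactly $n$, and that the Euler-characteristic bookkeeping places $2g$ in the numerator of the left-hand side rather than a value shifted by the annular surgery used in the geometric construction. This is exactly the information packaged into the definition of an SP data set in Section~\ref{sec:SP}, so I would cite that definition rather than re-derive it. Once the identity is in hand the rest is routine: the upper bound uses only $n_i \geq 2$, while the lower bound depends on the slightly less transparent divisibility constraint $n_i \mid n$, which is what forces $n_i \leq n$. I would finally note that the lower bound is already sharp: for the Margalit--Schleimer roots one has $g_0 = 0$ and $m = 1$ with $n_1 = n$, so it reads $n = 2g+1$, an equality.
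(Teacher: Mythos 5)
Your proposal is correct and follows essentially the same route as the paper: both rewrite the genus equation of the SP data set as $\frac{2g}{n} = 2g_0 + \sum_{i=1}^m\bigl(1 - \frac{1}{n_i}\bigr)$ and then obtain the upper bound from $n_i \geq 2$ and the lower bound from $n_i \leq n$. The additional remarks on the positivity of the denominators and on sharpness are fine but not needed.
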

}
\noindent Finally, we give a complete classification in $\Mod(S_5)$ (up to conjugacy) of side-preserving fractional powers that arise from cyclic actions whose quotient orbifold is topologically a sphere with three cone points. We shall define such fractional powers as \textit{essential fractional powers}.

A side-exchanging fractional power $h$ of $t_C$ will have an exponent of the form $\fraction{\ell}{2n}$ as it is obtained from a $C_{2n}$ action on $S_g$ that has two distinguished fixed points $P$ and $Q$ interchanged by a generator $h'$ of $C_{2n}$. Since the actions at $P$ and $Q$ are conjugate by $h'$, $P$ and $Q$ will have the same local turning angle and will descend to a single cone point of order $n$ in the quotient orbifold. As in the side-preserving case, we define an \textit{SE data set} to encode the algebraic information relating to the geometric construction of a side-exchanging fractional power.  The main theorem in Section~\ref{sec:SE} (Theorem~\ref{thm:SE-main}) establishes that SE data sets correspond to conjugacy classes of side-exchanging fractional powers. Since we know from~\cite{MK1} that side-exchanging (or even degree) roots do not exist, side-exchanging fractional powers cannot be powers of roots. But it is a natural question to ask whether there exist side-exchanging fractional powers that are powers of other (side-exchanging) fractional powers.  It is immediately apparent that a side-exchanging fractional power of exponent $\fraction{\ell}{2n}$, where $\ell$ is prime, can never be such a fractional power. But, when $\ell$ is composite, such a fractional can exist if it satisfies the condition given in the following proposition. (This proposition can be viewed as an analog of Proposition~\ref{coro:SP-powerofroot} for the side-exchanging case).
{
\renewcommand{\thetheorem}{\ref{coro:SE-powerofroot}}
\begin{proposition}
Let $h$ be a side-exchanging fractional power of $t_C$ of exponent $\fraction{\ell}{2n}$ such that $\ell$ is composite integer with $\gcd(\ell,n)=1$. Let $r$ be a divisor of $\ell$. Then $h = {(h')}^r$ for some side-exchanging fractional power $h'$ of~$t_C$ of exponent $\fraction{\ell'}{2n}$.
\end{proposition}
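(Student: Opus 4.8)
The plan is to mimic the algebraic argument behind Proposition~\ref{coro:SP-powerofroot}, exploiting that $h$ commutes with $t_C$. Since $h \in \Mod(S_g)$ is orientation-preserving and $h(C)=C$, we have $h t_C h^{-1} = t_{h(C)} = t_C$, so the subgroup $\langle h, t_C\rangle$ is abelian, and with the single relation $h^{2n}=t_C^\ell$ it is a quotient of $\Z^2/\langle(2n,-\ell)\rangle$. I would work entirely inside this abelian group and look for $h'$ of the form $h' = h^a t_C^b$. Before constructing it, note two constraints. First, because a side-exchanging map squares to a side-preserving one and $h=(h')^r$ is side-exchanging, the divisor $r$ must be odd; combined with $r \mid \ell$ and $\gcd(\ell,n)=1$ this forces $\gcd(r,2n)=1$. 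Second, writing $\ell = r\ell'$ pins down the only possible exponent of $h'$: from $h^{2n}=(h')^{2nr}=\bigl((h')^{2n}\bigr)^r$ we are compelled to have $(h')^{2n}=t_C^{\ell'}$ with $\ell' = \ell/r$.

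The construction is then immediate. Using $\gcd(r,2n)=1$, I would choose $a$ with $ar \equiv 1 \pmod{2n}$, put $k=(ar-1)/(2n) \in \Z$, set $b = -\ell' k$, and define $h' = h^a t_C^b$; since $\ell = r\ell'$, the integer $b$ is well defined. The two defining identities are then verified by direct computation in $\langle h,t_C\rangle$. For the first, $h^{ar}=h\,(h^{2n})^{k}=h\,t_C^{\ell k}$ while $t_C^{br}=t_C^{-\ell' k r}=t_C^{-\ell k}$, so $(h')^r = h^{ar}t_C^{br}=h$. For the second, $(h')^{2n}=(h^{2n})^{a}t_C^{2nb}=t_C^{\ell a + 2nb}$, and substituting $2nb = -\ell'(ar-1)$ collapses the exponent to $\ell a - \ell' a r + \ell' = \ell'$, so $(h')^{2n}=t_C^{\ell'}$ and $h'$ is a fractional power of exponent $\fraction{\ell'}{2n}$.

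It remains to check that $h'$ is side-exchanging. Since $ar \equiv 1 \pmod{2n}$ and $2n$ is even, $ar$ is odd, and as $r$ is odd this forces $a$ odd. Hence $h^a$ exchanges the two sides of $C$, whereas $t_C^b$ preserves them, so $h'=h^a t_C^b$ exchanges them. Thus $h'$ is a side-exchanging fractional power of exponent $\fraction{\ell'}{2n}$ with $(h')^r=h$, as claimed. If one wishes to certify $h'$ against the classification rather than merely exhibit it in $\Mod(S_g)$, one can read off its SE data set and appeal to Theorem~\ref{thm:SE-main}.

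The main obstacle is parity bookkeeping rather than any deep geometry: everything hinges on arranging $a$ to be odd, which is precisely what makes $h'$ genuinely side-exchanging, and this is where the argument departs from the side-preserving case. There one may take $r=\ell$ and reduce all the way to a root ($\ell'=1$); here that would produce a side-exchanging root of even degree $2n$, which cannot exist, so the reduction can only be carried out by odd divisors $r$ with $\ell'=\ell/r>1$. The coprimality $\gcd(r,2n)=1$ supplied by the hypotheses is exactly the condition that keeps $a$, and hence $h'$, in the side-exchanging regime.
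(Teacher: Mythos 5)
Your argument is genuinely different from the paper's and, where it applies, more explicit. The paper manipulates the SE data set (replacing $\ell$ by $\ell/r$ and multiplying $a$ and the $k_i$ by $r$), invokes the correspondence of Theorem~\ref{thm:SE-main} to obtain some $h'$, and then needs a separate orbifold-covering argument to see that this $h'$ can be taken to satisfy $(h')^r=h$. You instead exhibit $h'=h^a t_C^b$ directly inside the abelian group $\langle h,t_C\rangle$, and your verifications of $(h')^r=h$, of $(h')^{2n}=t_C^{\ell'}$, and of the side-exchanging property (via $a$ odd) are all correct. You are also right that the statement as printed cannot hold for even $r$, since an even power of a side-exchanging map preserves the sides of $C$; the paper never makes this restriction explicit, even though its own construction of $D'$ needs $r$ odd to keep $\gcd(rk_i,n_i)=1$ when some $n_i$ is even, and needs $\ell/r\geq 2$ for $D'$ to satisfy condition (i) of an SE data set.

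The unresolved difficulty is that your construction proves too much. Nothing in your computation uses $\ell'>1$: if $\ell$ is odd, then $\gcd(\ell,2n)=1$, and taking $r=\ell$ yields a side-exchanging $h'=h^at_C^b$ with $(h')^{2n}=t_C$, that is, a side-exchanging root of $t_C$ of even degree $2n$ --- impossible by \cite{MK1}, as the paper itself stresses. You acknowledge this and assert that ``the reduction can only be carried out by odd divisors $r$ with $\ell'>1$,'' but that is not a consequence of your argument: the formulas for $a$, $b$, and $h'$ are completely insensitive to whether $\ell'=1$ or $\ell'>1$. Since the key identity $ht_Ch^{-1}=t_{h(C)}=t_C$ holds for any orientation-preserving $h$ with $h(C)=C$, side-exchanging or not, I see no error in your algebra; the only consistent reading is that for odd $\ell$ with $\gcd(\ell,n)=1$ no such $h$ exists and the proposition is vacuous in that range. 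That conclusion collides with the paper's own examples (exponent $\fraction{3}{10}$ on $S_5$, exponent $\fraction{4g+1}{4g+2}$ in Remark~\ref{rem:SE-dataset}), so you have in effect exposed a tension between Theorem~\ref{thm:SE-main} and \cite{MK1} rather than merely reproved the proposition. Your write-up must confront the $r=\ell$, $\ell'=1$ case head-on instead of waving it away; as it stands, the last paragraph of your proposal asserts a restriction that your own construction contradicts.
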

}
From a result of Wiman~\cite{W1} (and later Harvey~\cite{WJH}), we know that $2n\leq 4g+2$, and in Remark~\ref{rem:SE-dataset}, we provide an SE data set that represents the conjugacy class of a side-exchanging fractional power of exponent $\fraction{4g+1}{4g+2}$ in $\Mod(S_{g+1})$, for all $g \geq 1$. The existence of  fractional powers of exponent $\fraction{4g+1}{4g+2}$ makes upper bounds for $2n$ rather superfluous. However, in the following corollary we derive a lower bound for $2n$.
{
\begin{corollary}
\renewcommand{\thetheorem}{\ref{coro:SE-ellandn}}
Suppose that $h$ is a SE fractional power of $t_C$ of exponent $\fraction{\ell}{2n}$ whose conjugacy class is given by the SE data set $D = ((\ell,2n),g_0,a;(k_1,n_1),\ldots,(k_m,n_m))$. Then
\begin{itemize}
\item[(a)] $n$ is odd if $\ell$ is odd, and
\item[(b)] $2n \geq \frac{2g+m}{2g_0+m-1}$
\end{itemize}
\end{corollary}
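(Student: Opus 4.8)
The plan is to treat the two parts by different mechanisms: part (a) by reducing to the side-preserving theory already established, and part (b) by extracting the Riemann--Hurwitz relation from the $C_{2n}$-action encoded by the SE data set $D$ via Theorem~\ref{thm:SE-main}.

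For part (a), I would first observe that since $h$ is side-exchanging, $h^2$ fixes each side of $C$, so $h^2$ is side-preserving; moreover $(h^2)^n = h^{2n} = t_C^{\ell}$, so $h^2$ is a side-preserving fractional power of $t_C$ of exponent $\fraction{\ell}{n}$. Applying Corollary~\ref{coro:SP-relupperbound}(a) to $h^2$ then gives immediately that $n$ is odd whenever $\ell$ is odd. (Equivalently, this parity constraint can be read off the twisting congruence built into the defining conditions of $D$, but the reduction to the side-preserving case is cleaner and avoids recomputation.)

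For part (b), I would pass from $D$ to the underlying degree-$2n$ branched cover $S_g \to \mathcal{O}$, whose quotient orbifold $\mathcal{O}$ has genus $g_0$, one distinguished cone point of order $n$ (the common image of the swapped points $P$ and $Q$, whose orbit $\{P,Q\}$ has size $2n/n=2$), and $m$ further cone points of orders $n_1,\dots,n_m$. Riemann--Hurwitz then reads
\[ 2-2g = 2n\Bigl[(2-2g_0) - \bigl(1-\tfrac1n\bigr) - \textstyle\sum_{i=1}^m\bigl(1-\tfrac1{n_i}\bigr)\Bigr]. \]
Clearing the single distinguished term and writing $\sum_i(1-1/n_i)=m-\sum_i 1/n_i$, this rearranges to
\[ 2n\,(2g_0+m-1) = 2g + \sum_{i=1}^m \frac{2n}{n_i}. \]
Each $n_i$ is the order of a point stabilizer and hence divides $2n$, so every $2n/n_i$ is a positive integer --- indeed the size of the corresponding orbit --- and in particular $2n/n_i \geq 1$. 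Summing the $m$ such terms gives $\sum_i 2n/n_i \geq m$, whence $2n(2g_0+m-1)\geq 2g+m$. The displayed identity also shows $2n(2g_0+m-1) = 2g + (\text{something positive}) > 0$, so $2g_0+m-1>0$ and we may divide to obtain $2n \geq (2g+m)/(2g_0+m-1)$, as claimed.

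The only real subtlety, and the step I would be most careful with, is the bookkeeping of the distinguished cone point. Because $P$ and $Q$ are interchanged by a generator of $C_{2n}$ they descend to a \emph{single} cone point of order $n$ rather than the two order-$n$ cone points present in the side-preserving setting; this is exactly what replaces the factor $2g_0+m$ of Corollary~\ref{coro:SP-absupperbound} by $2g_0+m-1$ here, and getting this single $-1$ right is the crux of the computation. Everything else is the routine estimate $2n/n_i\geq 1$ together with the observation that no SE fractional power can have $2g_0+m-1\leq 0$, since the Riemann--Hurwitz identity would then force $2g<0$.
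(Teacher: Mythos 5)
Your proof is correct, and it is worth noting where it diverges from the paper's. For part (a) the paper argues directly from conditions (ii) and (iii) of the SE data set: $\ell a\equiv 2\bmod n$ with $\gcd(a,n)=1$, so if $n$ were even then $a$ would be odd and $\ell$ forced even. Your reduction to Corollary~\ref{coro:SP-relupperbound}(a) via the side-preserving power $h^2$ (with $(h^2)^n=t_C^{\ell}$) is a genuinely different and valid route; it trades a one-line congruence for a citation, at the small cost of having to note that the SP machinery assumes $1\leq\ell<n$ (the excluded case $\ell=n$ is harmless here, since condition (iii) then forces $n\mid 2$ and $\ell$ even). For part (b) you and the paper both start from the same Riemann--Hurwitz relation (Equation~\ref{se-eq}) with the single order-$n$ cone point accounting for the swapped pair $\{P,Q\}$ --- the bookkeeping you flag as the crux --- and both reduce to the estimate $n_i\leq 2n$, i.e.\ $2n/n_i\geq 1$. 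Your packaging as the exact identity $2n(2g_0+m-1)=2g+\sum_i 2n/n_i$ is a mild improvement: it makes the positivity of $2g_0+m-1$ fall out of the identity itself rather than requiring the separate appeal to Remark~\ref{rem:SE-m} that the paper makes, and it sidesteps a direction slip in the paper's displayed intermediate inequality (which reads $\leq$ where the bound actually obtained is $\geq$). The substance of part (b) is otherwise the same argument.
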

}
\noindent We also show that a side-exchanging fractional power of exponent $\fraction{2}{2g+2}$ always exists. As in the side-preserving case, here too we give a complete classification of essential fractional powers in $\Mod(S_5)$.

Though, one would intuitively expect the occurrence of side-exchanging fractional powers to be more restrictive, the data obtained using GAP software~\cite{SPSOFTWARE, SESOFTWARE} seems to suggest that, in general,  side-exchanging fractional powers achieve more exponents than side-preserving powers. However, side-exchanging fractional powers (in general) are much fewer in number when compared with side-exchanging fractional powers. Table 1 in Section~\ref{sec:SP-SEcompare} lists the occurrences  of essential fractional powers of $t_C$ in $\Mod(S_{g+1})$ and their exponents for genera $g$ in the range $20 \leq g + 1 \leq 30$. 

\section{Side-preserving fractional powers}
\label{sec:SP}

In this section, we will derive necessary and sufficient conditions for the existence of a side-preserving fractional power of exponent $\fraction{\ell}{n}$ and some additional applications. In fact, the main result of~\cite{MK1} can be extended to describe the side-preserving fractional powers of $t_C$. Adapting the main definition from that paper, we define an \textit{SP data set of exponent $\fraction{\ell}{n}$}.

\begin{definition}
\label{def:spdataset}
An \textit{SP data set of exponent $\fraction{\ell}{n}$} is a tuple of the form $((\ell,n),g_0,(a,b); (k_1,n_1),\ldots,(k_m,n_m))$ where:
\begin{enumerate}
\item[(i)] $\ell$, $n$, $g_0$, and the $n_i$ are integers such that $n>1$, $g_0\geq 0$, each $n_i>1$, and each $n_i$ divides $n$,
\item[(ii)] $a$ and $b$ are residues modulo $n$ with $\gcd(a,n) = \gcd(b,n) = 1$, and each $k_i$ is a residue modulo $n_i$ with $\gcd(k_i,n_i)=1$,
\item[(iii)] $a + b \equiv \ell ab\bmod n$, and
\item[(iv)] $a + b + \displaystyle\sum_{i=1}^m \frac{n}{n_i}k_i \equiv 0\bmod n$.
\end{enumerate}
\end{definition}

\noindent The integer $g$ defined by
\begin{equation}
\label{sp-eq}
g=g_0n + \frac{1}{2} \sum_{i=1}^m \frac{n}{n_i}(n_i-1)
\end{equation}
\noindent is called the \textit{genus} of the data set.

Two SP data sets are considered to be the same if they differ by interchanging $a$ and $b$ or by reordering of the pairs $(k_1,n_1),\ldots,(k_m,n_m)$. If $m = 0$ in Definition~\ref{def:spdataset}, then condition (iv) would give $b \equiv -a \bmod n$, which when substituted in (iii), would imply that $\ell \equiv 0 \bmod n$. So we may assume that $m \geq 1$.

\begin{remark}
\label{rem:nlowerbound}
When $g_0 = 0$ and $m=1$, Equation~\ref{sp-eq} takes the form 
\[2g = n(1-\frac{1}{n_1})\ .\]
Since $n_1 \leq n$, we have that
\[2g \leq n(1 - \frac{1}{n}) = n - 1\ ,\] that is, $n \geq 2g+1$.
\end{remark}

The proof of Theorem~1.1 in~\cite{MK1} adapts easily, as we will explain, to give the following:

\begin{theorem}
\label{thm:SP-main}
For a given $n>1$, $1 \leq \ell < n$, and $g \geq 1$, SP data sets of genus $g$ and exponent $\fraction{\ell}{n}$ correspond to the
conjugacy classes in $\Mod(S_{g+1})$ of the side-preserving fractional powers of $t_C$ of exponent $\fraction{\ell}{n}$.  Consequently, $t_C$ has a
side-preserving fractional power of exponent $\fraction{\ell}{n}$ if and only if there exists an SP data set of genus $g$ and exponent $\fraction{\ell}{n}$.
\end{theorem}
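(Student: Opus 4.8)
The plan is to follow the architecture of the proof of Theorem~1.1 in \cite{MK1}, replacing the single Dehn twist by its $\ell$-th power throughout and tracking how this alters the relevant congruence. There are two directions: building a side-preserving fractional power out of an SP data set, and conversely recovering an SP data set from such a fractional power, with the two assignments shown to be mutually inverse at the level of conjugacy classes.

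For the construction (sufficiency), I would start from an SP data set $((\ell,n),g_0,(a,b);(k_1,n_1),\dots,(k_m,n_m))$. Conditions (i), (ii), and (iv) are precisely the data determining a surface-kernel epimorphism from the orbifold fundamental group of a genus-$g_0$ surface with $m+2$ cone points (of orders $n_1,\dots,n_m$ together with two cone points of order $n$ at the images of the distinguished points $P$ and $Q$) onto $C_n$; the relation $a+b+\sum_{i=1}^m\frac{n}{n_i}k_i\equiv 0 \bmod n$ in (iv) is exactly the requirement that the prescribed local rotation data close up, the contributions $a$ and $b$ being the full-order monodromies at $P$ and $Q$. By Thurston's orbifold theory (see \cite{T1,S1}), this epimorphism is realized by a genuine $C_n$-action on a surface $S_g$, whose genus is the Riemann--Hurwitz quantity recorded in Equation~\ref{sp-eq}, with $P$ and $Q$ as distinguished fixed points carrying local rotation data $a$ and $b$. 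I then excise $C_n$-invariant disks about $P$ and $Q$ and glue in an annulus $N$, extending the action across $N$ by a fractional-twist homeomorphism $\varphi$ whose $n$-th power is $t_C^{\ell}$. Condition (iii), $a+b\equiv \ell ab \bmod n$, is the compatibility condition guaranteeing that the local turning angles induced at the two ends of $N$ combine, after $n$ iterations, to exactly $\ell$ full twists; this is the only point at which the exponent $\ell$ enters, and it specializes to the root condition of \cite{MK1} when $\ell=1$. The resulting $h$ lives on $S_{g+1}$, satisfies $h^n=t_C^{\ell}$, and is side-preserving because every power acts by an orientation- and side-preserving rotation away from $N$.

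For the recovery (necessity and injectivity), I would take a side-preserving fractional power $h$, so that $h(C)=C$ up to isotopy by the argument in the introduction. Since $h$ preserves the two sides of $C$, after an isotopy I can put $h$ into the model form: periodic of order $n$ off a neighborhood $N$ of $C$, and a fractional twist on $N$. Collapsing $N$ and filling the two resulting punctures yields a $C_n$-action on $S_g$ with distinguished fixed points $P,Q$; reading off the quotient orbifold genus $g_0$, the cone orders $n_i$ with their rotation data $k_i$, and the local rotation data $a,b$ at $P,Q$ produces a tuple satisfying (i), (ii), and (iv). The identity $h^n=t_C^{\ell}$ forces the total twisting on $N$ to be $\ell$, which is precisely condition (iii). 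Finally, the two equivalences defining equality of SP data sets, namely swapping $a,b$ and reordering the $(k_i,n_i)$, correspond exactly to the ambiguities (which fixed point is labeled $P$ versus $Q$, and the ordering of the branch data) that leave the conjugacy class unchanged, so the assignment descends to a well-defined bijection between SP data sets of genus $g$ and exponent $\fraction{\ell}{n}$ and conjugacy classes of side-preserving fractional powers.

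The main obstacle I expect is the faithfulness of the conjugacy classification: proving that two side-preserving fractional powers are conjugate in $\Mod(S_{g+1})$ precisely when their SP data sets agree up to the stated equivalence. This rests on the classical classification of finite cyclic actions on surfaces up to topological conjugacy by their branch data, together with the verification that the fractional-twist extension on $N$ contributes no conjugacy invariant beyond the pair $(a,b)$ and the exponent $\fraction{\ell}{n}$ already recorded. Apart from the bookkeeping surrounding condition (iii), this is exactly the content that \cite{MK1} establishes for $\ell=1$, so the real task is to confirm that each ingredient of that argument, the orbifold realization, the twist extension, and the conjugacy bookkeeping, carries over with $t_C$ replaced by $t_C^{\ell}$ and with the root congruence $a+b\equiv ab$ replaced by $a+b\equiv \ell ab \bmod n$.
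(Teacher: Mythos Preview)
Your proposal is correct and follows essentially the same approach as the paper: both adapt the proof of Theorem~1.1 in \cite{MK1} by noting that the only substantive change is that the rotation-angle compatibility on the annulus $N$ now requires twisting through $2\pi\ell/n$ rather than $2\pi/n$, which translates algebraically into replacing $a+b\equiv ab\bmod n$ by $a+b\equiv \ell ab\bmod n$ (condition~(iii)). Your write-up is in fact more explicit than the paper's own proof, which is a terse sketch of the adaptation; the orbifold realization, the Riemann--Hurwitz bookkeeping, and the conjugacy analysis you outline are exactly what is being invoked.
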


\begin{proof}
The first part of the proof of Theorem~1.1 of ~\cite{MK1} analyzed the case when $h$ is a side-preserving fractional power of $t_C$, obtaining a
$C_n$-action on a closed surface $S_g$ with two fixed points $P$ and $Q$ (and possibly other  with nontrivial stabilizers). The analysis here
proceeds in exactly the same way, to the point when the rotation angles at $P$ and $Q$ are analyzed. The condition that $a+b \equiv ab\bmod n$ was shown to
be equivalent to the fact that the rotation angles at the two ends of the annulus $N$ in $S_{g+1}$ differ by $2\pi/n$, so that on $N$, $h$ must be have
left-handed twisting of $2\pi/n$. An analogous argument shows that the condition $a+b \equiv \ell ab\bmod n$ is equivalent to $h$ having left-handed twisting
through $2\pi \ell/n$, so that $h^n=t_C^\ell$. Thus an SP data set of exponent $\fraction{\ell}{n}$ produces a fractional power of exponent
$\fraction{\ell}{n}$. The next part of the proof of Theorem~1.1 showed that side-exchanging roots of $t_C$ do not exist, which is irrelevant to us since we
are assuming that $h$ is side-preserving. Finally, the converse is a matter of reversing the argument. The arguments for proving that a $h'$ conjugate to $h$ would also yield the same SP data set
and the converse are analogous to the arguments in the proof of Theorem~1.1. 
\end{proof}

Theorem~\ref{thm:SP-main} allows us to perceive the conjugacy classes of side-preserving fractional powers of $t_C$ on $S_{g+1}$ simply as SP data sets. 
So for a given $g \geq 1$, we can explicitly compute the various possible exponents of side-preserving fractional powers of in $\Mod(S_{g+1})$ using the algebraic conditions on SP data sets. In the following proposition, we will show that a fractional power of of $t_C$ of exponent $\fraction{\ell}{n}$ is the $\ell^{th}$ of a $n^{th}$ root when $\ell$ and $n$ are relatively prime.

\begin{proposition}
Suppose that $h$ is a side-preserving fractional power of $t_C$ of exponent $\fraction{\ell}{n}$ with $\gcd(\ell,n)=1$. Then $h = {(h')}^\ell$ for some root $h'$ of~$t_C$ of degree $n$.
\label{coro:SP-powerofroot}
\end{proposition}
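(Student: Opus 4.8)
The plan is to argue directly and algebraically, using only that $h$ commutes with $t_C$ together with $\gcd(\ell,n)=1$; the full strength of Theorem~\ref{thm:SP-main} is then needed only to reinterpret the construction on the level of SP data sets.

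First I would record that $h$ commutes with $t_C$. As noted in the introduction, $h(C)=C$ up to isotopy, and since every element of $\Mod(S_{g+1})$ preserves the orientation of the surface, conjugation by $h$ sends a left-handed twist about $C$ to a left-handed twist about $h(C)=C$; that is, $ht_Ch^{-1}=t_C$. Thus $h$ and $t_C$ generate an abelian subgroup of $\Mod(S_{g+1})$ in which $h^n=t_C^{\ell}$. Since $\gcd(\ell,n)=1$, I would choose integers $u,v$ with $u\ell+vn=1$ and set $h'=h^{u}t_C^{v}$. Using commutativity, $(h')^{n}=h^{un}t_C^{vn}=(h^n)^{u}t_C^{vn}=t_C^{\ell u+vn}=t_C$, so $h'$ is a root of $t_C$ of degree $n$. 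Likewise $(h')^{\ell}=h^{u\ell}t_C^{v\ell}=h^{1-vn}t_C^{v\ell}=h\,(h^{n})^{-v}t_C^{v\ell}=h\,t_C^{-\ell v}t_C^{v\ell}=h$, which is exactly the desired conclusion. I would also observe that $h'$ is independent of the choice of $u,v$: replacing $(u,v)$ by $(u+kn,\,v-k\ell)$ multiplies $h'$ by $h^{kn}t_C^{-k\ell}=t_C^{k\ell}t_C^{-k\ell}=1$. (As a power of $h$ times a power of $t_C$, the element $h'$ is automatically side-preserving, consistent with the fact from \cite{MK1} that roots of $t_C$ are never side-exchanging.)

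Finally, I would translate this into the language of SP data sets, both to confirm it against Theorem~\ref{thm:SP-main} and to exhibit the correspondence concretely. If $h$ has SP data set $((\ell,n),g_0,(a,b);(k_1,n_1),\ldots,(k_m,n_m))$, then the root $h'$ should correspond to $((1,n),g_0,(\ell a,\ell b);(\ell k_1,n_1),\ldots,(\ell k_m,n_m))$, the heuristic being that raising a cyclic action to its $\ell$-th power rescales the local rotation data. Because $\gcd(\ell,n)=1$, multiplication by $\ell$ is a bijection on residues coprime to $n$ (and to each $n_i$, since $n_i\mid n$), so $\gcd(\ell a,n)=\gcd(\ell b,n)=1$ and $\gcd(\ell k_i,n_i)=1$; condition (iii) for $h$, namely $a+b\equiv \ell ab\bmod n$, becomes $(\ell a)+(\ell b)\equiv(\ell a)(\ell b)\bmod n$, which is condition (iii) with exponent $1$; and multiplying condition (iv) for $h$ through by $\ell$ yields condition (iv) for the new tuple. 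The quantity in Equation~\ref{sp-eq} is unchanged, so the genus is preserved.

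The direct computation above has essentially no obstacle: the only points demanding care are the commutativity of $h$ and $t_C$ (which rests on orientation-preservation together with $h(C)=C$) and the appeal to B\'ezout, which is exactly where the hypothesis $\gcd(\ell,n)=1$ enters. The genuinely substantive point lies only in the alternative, data-set route, where one would have to verify that the geometric operation of passing to the $\ell$-th power of the $C_n$-action acts on SP data sets precisely by the rescaling described above; I would regard that compatibility as the main thing to check, and I would sidestep it by producing $h'$ explicitly, treating the data-set description as a consistency check rather than as the core of the argument.
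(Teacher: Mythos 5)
Your proof is correct, but it takes a genuinely different and more elementary route than the paper's. The paper argues at the level of SP data sets: it forms $D'$ from $D$ by replacing $\ell$ with $1$ and multiplying $a$, $b$, and the $k_i$ by $\ell$, invokes Theorem~\ref{thm:SP-main} to realize $D'$ by a root $h'$, and then identifies $h$ with $(h')^{\ell}$ by comparing the two covering transformations of the same quotient orbifold and the twisting on the annulus $N$. You instead exhibit $h'$ by the closed formula $h'=h^{u}t_C^{v}$ with $u\ell+vn=1$, using only the commutation $ht_Ch^{-1}=t_{h(C)}=t_C$ (which the paper itself records in the introduction) together with B\'ezout; the identities $(h')^{n}=t_C$ and $(h')^{\ell}=h$ are then one-line computations, and your data-set translation reproduces exactly the paper's $D'$, so the two constructions are consistent. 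Your route is shorter, bypasses Theorem~\ref{thm:SP-main} entirely, and produces a canonical $h'$ independent of the B\'ezout pair; what the paper's route buys is the explicit data set of $h'$, which is what the example following the proposition and the $S_5$ classification actually use. One point worth flagging: your argument nowhere uses that $h$ is side-preserving, so it in fact shows that \emph{any} $h\in\Mod(S_{g+1})$ with $h^{n}=t_C^{\ell}$ and $\gcd(\ell,n)=1$ is the $\ell$-th power of a degree-$n$ root of $t_C$; since roots of $t_C$ are necessarily side-preserving and of odd degree by~\cite{MK1}, this stronger statement places real constraints on side-exchanging exponents $\fraction{\ell}{2n}$ with $\gcd(\ell,2n)=1$ and should be reconciled with the examples asserted in Section~\ref{sec:SE}. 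For the proposition as stated, however, your proof is complete.
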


\begin{proof}
Describe the conjugacy class of $h$ by an SP data set \\$D = ((\ell,n),g_0,(a,b); (k_1,n_1),\ldots,(k_m,n_m))$, with $a+b \equiv \ell ab\bmod n$. Consider the tuple $D'$ obtained by replacing $\ell$ with 1, and 
multiplying the values $a$, $b$, and $k_1,\ldots$, $k_m$ by $\ell$. Since $\gcd (\ell,n) = 1$, $D'$ satisfies condition (ii) of an SP data set. Moreover, 
the fact that $\ell a + \ell b \equiv \ell a \, \ell b \bmod n$ would imply that $D'$ also satisfies condition (iii) of an SP data set. In other words, $D'$ represents 
a side-preserving fractional power $h'$ of degree $\fraction{1}{n}$, that is, $h'$ is a root of $t_C$ of degree $n$.

Recall the proof of Theorem 1.1 of~\cite{MK1}. The numerical data corresponding to $h$ described an orbifold $\O = S_g/C_n$ and an orbifold covering
$S_g \to \O$ corresponding to the kernel of a homomorphism $\pi_1^{orb}(\O)\to C_n$. The restriction $h_0$ of $h$ to a subsurface of $S_g$ was extended to an
annulus $N$, giving $h$ on $S_{g+1}$. The $h'$ above is obtained from the same orbifold $\O$ as $h$ is, but using a restriction of $h_0'$ of a different
covering transformation of $S_g$. Both have order $n$, so $h_0$ equals some power of $h_0'$. On $N$, $h^n = t_C^{\ell} = (h')^{\ell n}$, so that power is the
$\ell^{th}$ power (this can also be checked directly by examining the rotation angles of $h$ and $h'$ about the points $P$ and $Q$ in $F$).
\end{proof}

We now give an example to illustrate Proposition~\ref{coro:SP-powerofroot}.

\begin{example}
The SP data set $D = ((2, 9), 0, (1, 1); (7, 9))$, which represents the conjugacy class of a side-preserving fractional power of $t_C$ of exponent $\fraction{2}{9}$ in $\Mod(S_5)$, 
is the $2^{nd}$ power of a $9^{th}$ root of $t_C$, whose conjugacy class is given by the data set $D' = ((1, 9), 0, (2, 2); (5, 9))$. This is evident by multiplying $a = 1$, 
$b = 1$, and $c = 7$ of $D$ by 2 modulo 9, and then replacing $ \ell = 2$ with 1, to obtain $D'$.
\end{example}

An immediate application of Theorem~\ref{thm:SP-main} is the following corollary, where we derive and upper bound for $n$ when $\gcd(\ell,n) = 1$.

\begin{corollary}
\label{coro:SP-relupperbound}
Suppose that $h$ is a side-preserving fractional power of $t_C$ of degree $\fraction{\ell}{n}$. Then
\begin{itemize}
\item[(a)] $n$ is odd if $\ell$ is odd.
\item[(b)] $n \leq 2g+1$ if $\gcd(\ell,n) = 1$.
\end{itemize}
\end{corollary}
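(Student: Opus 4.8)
The plan is to invoke Theorem~\ref{thm:SP-main} to replace $h$ by its SP data set $D = ((\ell,n),g_0,(a,b);(k_1,n_1),\ldots,(k_m,n_m))$ of genus $g$, and then to argue entirely with the arithmetic conditions (ii)--(iv) of Definition~\ref{def:spdataset} together with the genus equation~\eqref{sp-eq}. As observed after Definition~\ref{def:spdataset}, we may assume $m \geq 1$ throughout.

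For part (a), I would argue by contradiction: suppose $\ell$ is odd but $n$ is even. Condition (ii) gives $\gcd(a,n)=\gcd(b,n)=1$, so $a$ and $b$ must both be odd; hence $a+b$ is even while $\ell ab$ is odd. Reducing condition (iii), $a+b \equiv \ell ab \bmod n$, modulo $2$ — which is legitimate because $2 \mid n$ — then yields $0 \equiv 1 \bmod 2$, a contradiction. Thus $n$ is odd whenever $\ell$ is odd.

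For part (b), the crux is the following consequence of (iii) and (iv). Combining $a+b \equiv \ell ab \bmod n$ with $a+b+\sum_{i=1}^m \frac{n}{n_i}k_i \equiv 0 \bmod n$ gives $\sum_{i=1}^m \frac{n}{n_i}k_i \equiv -\ell ab \bmod n$. Since $a$ and $b$ are units modulo $n$ and $\gcd(\ell,n)=1$, the right-hand side is a unit, so $\gcd(\sum_{i=1}^m \frac{n}{n_i}k_i, n)=1$. With this in hand I would read the bound off the rewritten genus equation $2g = 2ng_0 + \sum_{i=1}^m (n - \frac{n}{n_i})$ by a short case analysis. If $g_0 \geq 1$ then $2g \geq 2ng_0 \geq 2n$; and if $g_0 = 0$ with $m \geq 2$ then, since each $n_i \geq 2$ forces $n - \frac{n}{n_i} \geq \frac{n}{2}$, the sum of the $m \geq 2$ terms is at least $n$, so again $2g \geq n$. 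In both of these cases $n \leq 2g \leq 2g+1$.

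The main obstacle is the remaining extremal case $g_0 = 0$, $m = 1$, where the genus is smallest and the inequality is tight, and where the coprimality observation does the essential work. With a single term the observation reads $\gcd(\frac{n}{n_1}k_1, n) = 1$; but $\gcd(\frac{n}{n_1}k_1, n) = \frac{n}{n_1}\gcd(k_1,n_1) = \frac{n}{n_1}$ by condition (ii), which forces $n_1 = n$. The genus equation then collapses to $2g = n-1$, giving exactly $n = 2g+1$ (consistent with Remark~\ref{rem:nlowerbound}). Assembling the three cases yields $n \leq 2g+1$. As an alternative route I could first apply Proposition~\ref{coro:SP-powerofroot} to write $h = (h')^\ell$ for a degree-$n$ root $h'$ and then bound the degree of a root, but since that reduction ultimately rests on the same genus-and-coprimality analysis, the direct data-set computation above settles the claim in a single pass.
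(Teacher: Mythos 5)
Your proof is correct and follows essentially the same route as the paper: part (a) is the same parity observation on condition (iii), and part (b) uses the genus equation to isolate the extremal case $g_0=0$, $m=1$ and then combines conditions (iii) and (iv) with the coprimality hypotheses to pin down $n_1$ and hence $n$. The only difference is organizational — the paper argues by contradiction from $n>2g+1$ and derives $a+b\equiv 0 \bmod d$ violating (iii), whereas you argue directly that the unit condition forces $n_1=n$ and $n=2g+1$ — but the underlying arithmetic is identical.
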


\begin{proof}
For a data set describing $h$, we have $a+b \equiv \ell ab\bmod n$. If $n$ is even, then $\ell$ must be even since $a$ and $b$ are relatively prime to~$n$.
This proves part $(a)$. For (b), suppose for contradiction that $n>2g+1$. From Equation~\ref{sp-eq}, we have that
\[1 > \frac{2g+1}{n} = \frac{1}{n} + 2g_0 + \sum_{i=1}^m (1 - \frac{1}{n_i})\ .\]

This would imply that $g_0=0$ and $m=1$, and consequently $n_1 < n$. Putting $d = n/n_1$, condition (iv) of Definition~\ref{def:spdataset} gives $a+b \equiv 0\bmod d$. 
Since $\gcd(\ell,n) = 1$ and $d$ divides $n$, this contradicts condition (iii) of Definition~\ref{def:spdataset}.
\end{proof}

\begin{remark}
Interestingly, the largest possible $\ell$ for which there exits a side-preserving fractional power of exponent $\fraction{\ell}{2g+1}$ is $2g$.
In fact, the SP data sets $D_1 = ((2g,2g+1), 0, (1, g); (g, 2g+1))$ and $D_2 = ((2g,2g+1), 0, (2g-1, 2g-1); (4, 2g+1))$ represent conjugacy classes of side-preserving fractional powers of $t_C$ of 
exponent $\fraction{2g}{2g+1}$ in $\Mod(S_{g+1})$.
\end{remark}

In the following corollary, we will derive an upper bound and a lower bound for $n$.

\begin{corollary}
\label{coro:SP-absupperbound}
Suppose that $h$ is a side-preserving fractional power of $t_C$ of exponent $\fraction{\ell}{n}$ whose conjugacy class is given by the SP data set
$D = ((\ell,n),g_0,(a,b);(c_1,n_1),\dots,(c_m,n_m))$. Then \[\frac{2g+m}{2g_0+m} \leq n \leq \frac{4g}{4g_0+m}\ .\]
\end{corollary}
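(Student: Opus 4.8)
The plan is to read both inequalities directly off the genus formula (Equation~\ref{sp-eq}), which I would first recast into a more convenient shape. Multiplying by $2$ and expanding $\frac{n}{n_i}(n_i-1) = n - \frac{n}{n_i}$ gives
\[
2g = n(2g_0 + m) - \sum_{i=1}^m \frac{n}{n_i}\ ,
\]
so that $n(2g_0+m) = 2g + \sum_{i=1}^m \frac{n}{n_i}$. Both stated bounds then reduce to controlling the single sum $\sum_{i=1}^m \frac{n}{n_i}$, using only the constraints from Definition~\ref{def:spdataset}(i) that each $n_i$ divides $n$ and each $n_i > 1$; equivalently, that $2 \le n_i \le n$ for every $i$.

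For the lower bound I would use $n_i \le n$, which gives $\frac{n}{n_i} \ge 1$ for each $i$, hence $\sum_{i=1}^m \frac{n}{n_i} \ge m$. Substituting this into the displayed identity yields $n(2g_0+m) \ge 2g + m$, which is exactly $n \ge \frac{2g+m}{2g_0+m}$. Here $2g_0 + m > 0$ because $m \ge 1$ (as noted in the discussion following Definition~\ref{def:spdataset}), so dividing across is harmless.

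For the upper bound I would instead use $n_i \ge 2$, which gives $\frac{n}{n_i} \le \frac{n}{2}$, and hence $2\sum_{i=1}^m \frac{n}{n_i} \le mn$. Doubling the displayed identity gives
\[
4g = n(4g_0 + 2m) - 2\sum_{i=1}^m \frac{n}{n_i} \ge n(4g_0 + 2m) - mn = n(4g_0 + m)\ ,
\]
which rearranges to $n \le \frac{4g}{4g_0+m}$, again using $4g_0 + m > 0$.

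Since each of these is an elementary estimate once the genus formula has been rewritten, I do not anticipate a genuine obstacle. The only point requiring care is to confirm the admissible range $2 \le n_i \le n$ of each $n_i$—a divisor of $n$ exceeding $1$—and that $m \ge 1$, both of which are guaranteed by Definition~\ref{def:spdataset} and the remark immediately following it; these in turn ensure that the denominators $2g_0 + m$ and $4g_0 + m$ are strictly positive.
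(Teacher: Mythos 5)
Your proof is correct and is essentially the paper's own argument: both start from the genus formula (Equation~\ref{sp-eq}) and obtain the two bounds by estimating each $n_i$ from above by $n$ and from below by $2$, differing only in whether the identity is written as $\frac{2g}{n} = 2g_0 + \sum_i\bigl(1 - \frac{1}{n_i}\bigr)$ or cleared of denominators as $n(2g_0+m) = 2g + \sum_i \frac{n}{n_i}$. Your explicit check that $m \geq 1$ makes the denominators positive is a small point the paper leaves implicit.
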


\begin{proof}
To show that $n \geq \frac{2g+m}{2g_0+m}$, we use Equation~\ref{sp-eq} from the definition of an SP data set. On rewriting the equation, we get
\begin{equation}
\label{eqn:SP-simplified}
\frac{2g}{n} = 2g_0 + \sum_{i=1}^m (1 - \frac{1}{n_i})
\end{equation}
Since each $x_i \leq n$, we have that
\[ \frac{2g}{n} \leq 2g_0 + m - \frac{m}{n} \ ,\] from which we obtain the required inequality.

For the latter inequality, we use the fact that $n_i \geq  2$ in Equation~\ref{eqn:SP-simplified} above to obtain
 \[ \frac{2g}{n} \geq 2g_0 + \frac{m}{2}\ ,\] which upon simplification gives the inequality.
\end{proof}

The following corollary follows almost immediately from Corollary~\ref{coro:SP-absupperbound}.

\begin{corollary}
\label{coro:SP-mlbound}
Suppose that $h$ is a side-preserving fractional power of $t_C$ of exponent $\fraction{\ell}{n}$ given by the SP data set $D = ((\ell,n),g_0,(a,b);(c_1,n_1),\dots,(c_m,n_m))$. Then
\begin{itemize}
\item[(a)] $n \leq 4g$,
\item[(b)] $n < g$, whenever $g_0 \geq 1$, and
\item[(c)] $m = 1$, whenever $n > 2g$.
\end{itemize}	
\end{corollary}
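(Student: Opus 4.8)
The plan is to derive all three parts directly from the two inequalities established in Corollary~\ref{coro:SP-absupperbound}, namely $\frac{2g+m}{2g_0+m} \leq n \leq \frac{4g}{4g_0+m}$, together with the standing assumption (noted after Definition~\ref{def:spdataset}) that $m \geq 1$. For part~(a), I would simply observe that since $g_0 \geq 0$, the upper bound gives $n \leq \frac{4g}{4g_0+m} \leq \frac{4g}{m} \leq 4g$, where the last step uses $m \geq 1$. This is immediate and requires no further work.

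For part~(b), I would again start from the upper bound $n \leq \frac{4g}{4g_0+m}$ and impose the hypothesis $g_0 \geq 1$. Then $4g_0 + m \geq 4 + 1 = 5$ (using $m \geq 1$), so $n \leq \frac{4g}{5} < g$. Here the comparison $\frac{4g}{5} < g$ holds for all $g \geq 1$ since $4g < 5g$, so the strict inequality $n < g$ follows. The only mild subtlety is to confirm that $4g_0 + m \geq 5$ rather than merely $\geq 4$, which is where the $g_0 \geq 1$ together with $m \geq 1$ is used.

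For part~(c), I would argue by contraposition, showing that $m \geq 2$ forces $n \leq 2g$. Using the upper bound with $m \geq 2$ and $g_0 \geq 0$, we get $n \leq \frac{4g}{4g_0 + m} \leq \frac{4g}{m} \leq \frac{4g}{2} = 2g$. Hence $n > 2g$ is incompatible with $m \geq 2$, forcing $m = 1$.

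None of these steps presents a genuine obstacle; the work was effectively done in Corollary~\ref{coro:SP-absupperbound}, and this corollary is a matter of substituting the allowed ranges of $g_0$ and $m$ into the upper bound and reading off the consequences. The one point requiring a little care is keeping track of which inequalities are strict: part~(b) asserts the strict inequality $n < g$, which I would justify via $4g < 5g$, while parts~(a) and~(c) are non-strict and follow from $m \geq 1$ and $m \geq 2$ respectively. I expect the entire proof to be two or three lines.
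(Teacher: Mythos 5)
Your proof is correct and takes exactly the route the paper intends: the paper gives no explicit argument for this corollary, stating only that it ``follows almost immediately from Corollary~\ref{coro:SP-absupperbound}'', and your substitutions of $g_0 \geq 0$, $g_0 \geq 1$, and $m \geq 2$ into the upper bound $n \leq \frac{4g}{4g_0+m}$ (together with the standing assumption $m \geq 1$) are precisely that immediate deduction.
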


\begin{remark}
\label{rem:ub-SP}
The upper bound for $n$ in Corollary~\ref{coro:SP-mlbound} is realizable since there always exist side-preserving fractional powers of exponent $\fraction{\ell}{4g}$.
For example, the data sets $D_1 = ((2g,4g), 0, (1,2g-1); (1, 2))$ and $D_2 = ((2g,4g), 0, (2g+1,4g-1); (1, 2))$ represent conjugacy classes of side-preserving fractional powers of $t_C$ of
exponent $\fraction{2g}{4g}$ in $\Mod(S_{g+1})$.
\end{remark}

Geometrically, data sets with $g_0 = 0$ and $m = 1$ represent conjugacy classes of essential fractional powers. From Remark~\ref{rem:nlowerbound},  we know that $n \geq 2g+1$, whenever $g_0 = 0$ and $m = 1$. We now list all such SP data sets that represent conjugacy classes of side-preserving fractional powers of exponent~\fraction{\ell}{n} in $\Mod(S_5)$.

\noindent Exponent $\fraction{1}{9}$:
\begin{itemize}
\item[(i)] $D_1 = ((1, 9), 0, (2, 2); (5, 9))$.
\item[(ii)]$D_2 = ((1, 9), 0, (5, 8); (5, 9))$,
\end{itemize}

\noindent Exponent $\fraction{2}{9}$:
\begin{itemize}
\item[(i)] $D_1 = ((2, 9), 0, (1, 1); (7, 9))$.
\item[(ii)] $D_2 = ((2, 9), 0, (4, 7); (7, 9))$.
\end{itemize}

\noindent Exponent $\fraction{4}{9}$:
\begin{itemize}
\item [(i)] $D_1 = ((4, 9), 0, (2, 8); (8, 9))$.
\item [(ii)] $D_2 = ((4, 9), 0, (5, 5); (8, 9))$.
\end{itemize}

\noindent Exponent $\fraction{5}{9}$:
\begin{itemize}
\item [(i)] $D_1 = ((5, 9), 0, (1, 7); (1, 9))$.
\item [(ii)] $D_2 = ((5, 9), 0, (4, 4); (1, 9))$.
\end{itemize}

\noindent Exponent $\fraction{7}{9}$:
\begin{itemize}
\item [(i)] $D_1 = ((7, 9), 0, (2, 5); (2, 9))$.
\item [(ii)] $D_2 = ((7, 9), 0, (8, 8); (2, 9))$.
\end{itemize}

\noindent Exponent $\fraction{8}{9}$:
\begin{itemize}
\item [(i)] $D_1 = ((8, 9), 0, (1, 4); (4, 9))$.
\item [(ii)] $D_2 = ((8, 9), 0, (7, 7); (4, 9))$.
\end{itemize}

\noindent Exponent $\fraction{2}{10}$:
\begin{itemize}
\item [(i)] $D_1 = ((2, 10), 0, (1, 1); (4, 5))$.
\item [(ii)] $D_2 = ((2, 10), 0, (7, 9); (2, 5))$.
\end{itemize}

\noindent Exponent $\fraction{4}{10}$:
\begin{itemize}
\item [(i)] $D_1 = ((4, 10), 0, (1, 7); (1, 5))$.
\item [(ii)] $D_2 = ((4, 10), 0, (3, 3); (2, 5))$.
\end{itemize}

\noindent Exponent $\fraction{6}{10}$:
\begin{itemize}
\item [(i)] $D_1 = ((6, 10), 0, (3, 9); (4, 5))$.
\item [(ii)] $D_2 = ((6, 10), 0, (7, 7); (3, 5))$.
\end{itemize}

\noindent Exponent $\fraction{8}{10}$:
\begin{itemize}
\item [(i)] $D_1 = ((8, 10), 0, (1, 3); (3, 5))$.
\item [(ii)] $D_2 = ((8, 10), 0, (9, 9); (1, 5))$.
\end{itemize}

\noindent Exponent $\fraction{4}{12}$:
\begin{itemize}
\item[(i)] $D_1 = ((4, 12), 0, (5, 11); (2, 3))$.
\end{itemize}

\noindent Exponent $\fraction{8}{12}$:
\begin{itemize}
\item [(i)] $D_1 = ((8, 12), 0, (1, 7); (1, 3))$.
\end{itemize}

\noindent Exponent $\fraction{8}{16}$:
\begin{itemize}
\item[(i)] $D_1 = ((8, 16), 0, (1, 7); (1, 2))$.
\item[(ii)] $D_2 = ((8, 16), 0, (3, 5); (1, 2))$.
\item[(iii)] $D_3 = ((8, 16), 0, (9, 15); (1, 2))$.
\item[(iv)] $D_4 = ((8, 16), 0, (11, 13); (1, 2))$.
\end{itemize}

\noindent In the above classification, it may be noted that the side-preserving fractional powers of exponent $\fraction{\ell}{9}$, for $\ell = 2, 4, 5, 7, \text{ and } 8$ are all powers of the 
fractional powers of exponent $\fraction{1}{9}$, that is, the ninth roots of $t_C$ on $S_5$. Moreover, the highest value that $\ell$ takes is $2g = 8$. 
These computations were made using the help of software~\cite{SPSOFTWARE} written in the GAP programming language.

\section{Side-exchanging fractional powers}
\label{sec:SE}
In this section, we shall derive equivalent conditions for the existence of side-exchanging fractional powers of $t_C$ on $S_{g+1}$. 
The geometric construction of side-exchanging fractional powers differs from that of side-preserving powers, as they 
are obtained from $C_{2n}$-action on $S_g$ with two distinguished fixed points that correspond to a unique cone point of order $n$ in the quotient orbifold.
Therefore, we need to analyze a slightly different kind of orbifold in this case, which motivates the following the following definition of an \textit{SE data set}.

\begin{definition}
A \textit{SE data set of exponent $\fraction{\ell}{2n}$} is a tuple $((\ell,2n),g_0,a; (k_1,n_1)\-,\ldots,(k_m,n_m))$ where:
\begin{enumerate}
\item[(i)] $\ell$, $n$, $g_0$, and the $n_i$ are integers such that $\ell \geq 2$, $n \geq 2$, $g_0 \geq 0$, each $n_i>1$, and each $n_i$ divides $2n$,
\item[(ii)] $a$ is a residue modulo $n$ with $\gcd(a,n) = 1$, and each $k_i$ is a residue modulo $n_i$ with $\gcd(k_i,n_i)=1$,
\item[(iii)] $\ell\,a\equiv2\bmod n$, and
\item[(iv)] $2a + \displaystyle\sum_{i=1}^m \frac{2n}{n_i}k_i \equiv 0 \bmod 2n$.
\end{enumerate}
\end{definition}
\noindent The integer $g$ defined by
\[g=n(2g_0-1) + \sum_{i=1}^m \frac{n}{n_i}(n_i-1)\]
is called the \textit{genus} of the data set.

Two SE data sets are considered to be same if they differ by the reordering of the pairs $(k_1,n_1),\ldots,(k_m,n_m)$.

\begin{remark}
\label{rem:SE-m}
As in case of SP data sets, it is apparent here too that if $m = 0$, then $\ell \equiv 0 \bmod n$. If $m = 1$ and $g_0 = 0$, then from Equation~\ref{se-eq}, we have
\[\frac{1-g}{n} = \frac{1}{n} + \frac{1}{x_1}\, \] which would imply that
\[-\frac{g}{n} = \frac{1}{x_1}\, \]which is impossible.
\end{remark}

We will now establish the main theorem in this section in which we will show that SE data sets correspond to conjugacy classes of side-exchanging fractional powers.

\begin{theorem} For a given $n\geq 1$, $1\leq \ell\leq n$, and $g\geq 0$, the SE data sets of genus $g$ and exponent $\fraction{\ell}{2n}$ correspond to the conjugacy classes in 
$\Mod(S_{g+1})$ of the side-exchanging fractional powers of $t_C$ of exponent $\fraction{\ell}{2n}$. Consequently, $t_C$ has a side-exchanging 
fractional power of exponent $\fraction{\ell}{2n}$ if and only if there exists a data set of genus $g$ and exponent $\fraction{\ell}{2n}$.
\label{thm:SE-main}
\end{theorem}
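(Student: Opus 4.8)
The plan is to mirror the structure of the proof of Theorem~\ref{thm:SP-main}, which in turn adapts Theorem~1.1 of~\cite{MK1}, making the modifications dictated by the side-exchanging geometry. I would begin with the forward direction: given a side-exchanging fractional power $h$ of exponent $\fraction{\ell}{2n}$, I would recover the algebraic data. Since $h$ exchanges the two sides of $C$ and $h^{2n}=t_C^\ell$, the order of $h$ as a mapping class acting away from $C$ must be even, and $h$ is obtained from a $C_{2n}$-action on $S_g$ in which a generator $h'$ swaps the two distinguished fixed points $P$ and $Q$. I would pass to the quotient orbifold $\O = S_g/C_{2n}$. The crucial structural point, already noted in the paragraph preceding the SE data set definition, is that because $h'$ conjugates the local action at $P$ to that at $Q$, the two fixed points have the \emph{same} local rotation data and descend to a \emph{single} cone point, whose order is $n$ rather than $2n$. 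This is exactly the source of the factor-of-$2$ asymmetry throughout the SE definition (conditions (iii)--(iv)) and the modified genus formula, and pinning it down rigorously is where I expect the real work to lie.

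Next I would read off the numerical invariants. The quotient orbifold has underlying genus $g_0$, the distinguished cone point of order $n$ contributing the residue $a$ (normalized so that $\gcd(a,n)=1$), and the remaining cone points recorded as the pairs $(k_i,n_i)$ with each $n_i \mid 2n$. Condition (iv), $2a + \sum_i \frac{2n}{n_i}k_i \equiv 0 \bmod 2n$, is the statement that the chosen homomorphism $\pi_1^{orb}(\O)\to C_{2n}$ is well defined (the product-of-generators relation maps to the identity), where the distinguished cone point contributes $2a$ precisely because $P$ and $Q$ together form one orbit of size $2n$ over a single cone point of order $n$. Condition (iii), $\ell a \equiv 2 \bmod n$, should be derived exactly as the twisting condition $a+b\equiv \ell ab \bmod n$ was in Theorem~\ref{thm:SP-main}: it encodes that the rotation angles forced at the two ends of the annulus $N$ produce left-handed twisting through $2\pi\ell/2n$, so that $h^{2n}=t_C^\ell$. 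The Riemann--Hurwitz computation for the $C_{2n}$-cover branched over $\O$ then yields the genus formula $g = n(2g_0-1) + \sum_i \frac{n}{n_i}(n_i-1)$, with the $-n$ term reflecting that the distinguished cone point lifts to the two points $P,Q$ rather than to a full orbit of $2n$ points.

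For the converse, I would reverse the construction: from an abstract SE data set I build the orbifold $\O$ with the prescribed cone points, use (iv) to define the branched $C_{2n}$-cover $S_g \to \O$, realize a generator $h_0'$ of the deck group, remove invariant disks around the two points $P,Q$ over the distinguished cone point, and glue in the annulus $N$, extending $h_0'$ across $N$ by the twisting map whose $2n$-th power is $t_C^\ell$ (this is where (iii) guarantees the twisting is consistent). The resulting $h \in \Mod(S_{g+1})$ is side-exchanging because $h_0'$ swaps $P$ and $Q$, and $g+1$ is the genus of $S_{g+1}$. Finally I would argue, as in Theorem~\ref{thm:SP-main}, that conjugate fractional powers yield the same SE data set (up to reordering of the $(k_i,n_i)$) and conversely, so the correspondence descends to conjugacy classes; the existence statement is then immediate. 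The main obstacle, as flagged above, is the careful orbifold bookkeeping that forces the single cone point of order $n$ and the consequent factor-of-$2$ discrepancies in (iii), (iv), and the genus formula; once that geometric picture is correctly set up, the remaining steps are routine adaptations of~\cite{MK1} and of the side-preserving argument.
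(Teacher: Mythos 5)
Your proposal is correct and follows essentially the same route as the paper: obtain the $C_{2n}$-action, observe that $P$ and $Q$ are conjugate under a generator and so descend to a single cone point of order $n$, derive condition (iii) from the twisting angle and condition (iv) from the product-of-generators relation, get the genus formula from the orbifold Euler characteristic, and reverse the construction for the converse. One small slip in your justification of the $2a$ term: $\{P,Q\}$ is an orbit of size $2$ (not $2n$) with stabilizer generated by $t^{2}$, which is exactly why $\alpha$ maps to $t^{2a}$; the paper also phrases the twisting condition via $h^{2}$ (which preserves the ends of $N$, giving $2k\equiv\ell\bmod n$) rather than via $h$ itself, but these are equivalent.
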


\begin{proof}
Suppose that $h$ is a side-exchanging fractional power of exponent $\fraction{\ell}{2n}$. As in the first part of the proof of Theorem~1.1, we use 
$h$ to obtain a $C_{2n}$-action on the closed surface $S_g$. Since $h$ exchanges 
the sides of $C$, $h^2$ preserves the sides of $C$ and hence the centers of the coned disks, $P$ and $Q$. Since the actions at $P$ and $Q$ 
are conjugate by a homeomorphism $h'$ that generates $C_{2n}$, rotation angles at $P$ and $Q$ must be equal to $2\pi k /n$ for some integer $k$ as indicated in Figure 1.

\begin{figure}[h]
\label{twisting}
\labellist
\small
\pinlabel $h^2(A)$ [B] at -18 25
\pinlabel $h^2(B)$ [B] at 300 25
\pinlabel $h$ [B] at 400 190
\pinlabel $h$ [B] at 370 650
\pinlabel $h(B)$ [B] at 390 512
\pinlabel $h(A)$ [B] at 616 350
\pinlabel $h^2$ [B] at 150 420
\pinlabel $A$ [B] at 25 790
\pinlabel $B$ [B] at 275 790
\pinlabel $P$ [B] at 35 105
\pinlabel $Q$ [B] at 239 105
\endlabellist
\centering
\includegraphics[width = 45 ex]{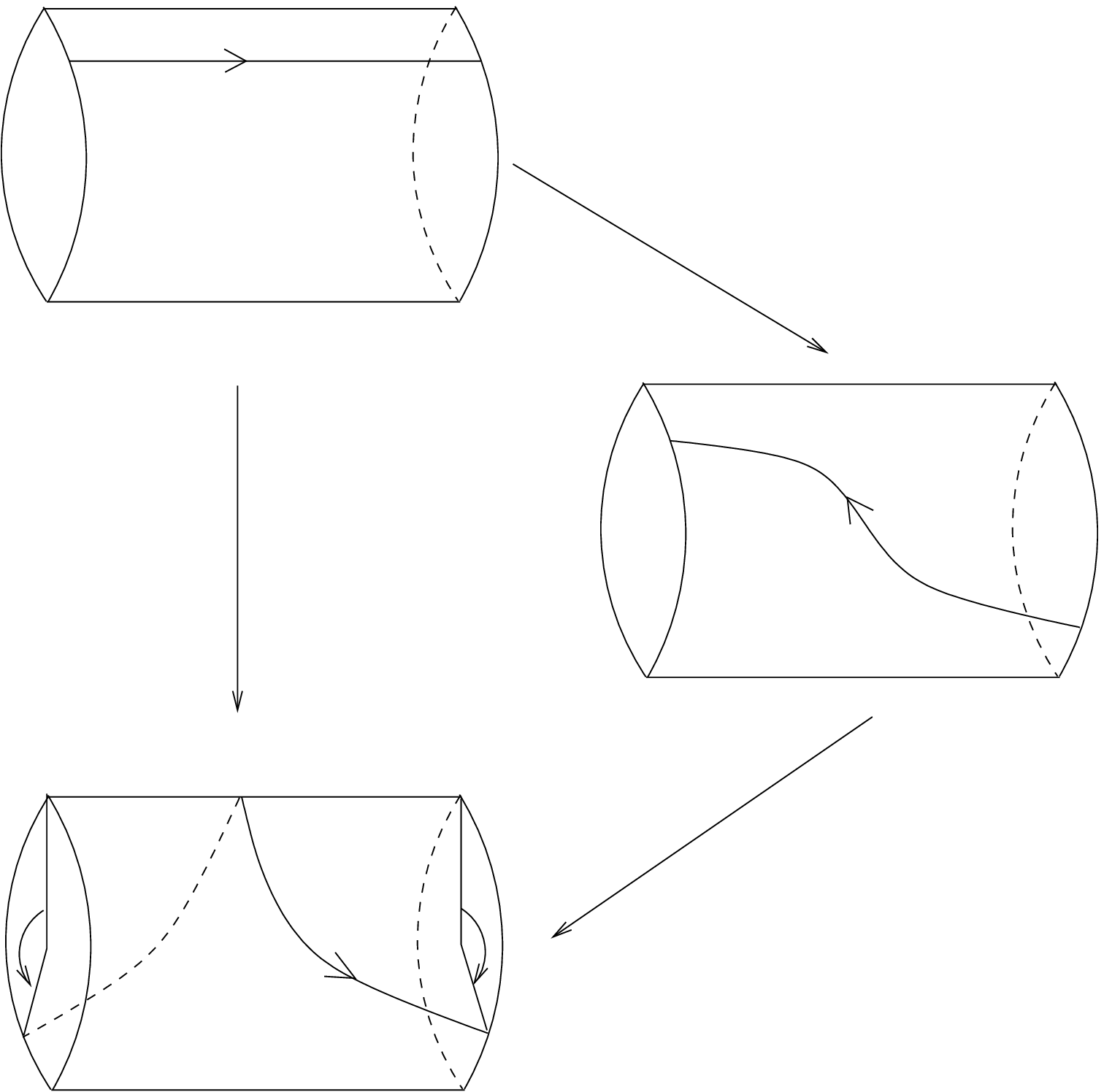}
\caption{The figure indicates the local effect of $h$ and $h^2$ on disk neighborhoods of $P$ and $Q$. The rotation angle of $h^2$ at $P$ and $Q$ is $2\pi {k}/{n}$.}
\end{figure}

Let $\O$ be the quotient orbifold for the $C_{2n}$ action on $S_g$. Denote the genus of $\O$ by $g_0$, and select standard generators $a_i, b_i, 1 \leq i \leq g_0$ of the fundamental group of the underlying surface. $\O$ has one distinguished cone point, $p$, of order $n$, which is the image of the distinguished fixed $P$ and $Q$ under the quotient map, and possibly $m$ other cone points $x_i$, $1\leq i\leq m$.

From orbifold covering space theory, the orbifold covering map $S_g \to \O$ corresponds to an exact sequence
\[ 1 \longrightarrow \pi_1(S_g) \longrightarrow \pi_1^{orb}(\O)\stackrel{\rho}{\longrightarrow} C_{2n} \longrightarrow 1\ .\]
Here, $C_{2n}$ is the group of covering transformations, generated by $t$, and $t^2$ generates the stabilizer at $P$. Let $\alpha$ be the generator of $\pi_1^{orb}\O$ going around $p$ and $\gamma_i$ be generators going around the $x_i$, selected so that
\begin{gather*}
\pi_1^{orb}(\O)=\langle \alpha, \gamma_1,\ldots, \gamma_m,
a_1,b_1,\ldots, a_{g_0}, b_{g_0}\;\vert\;\\
\alpha^n=\gamma_1^{n_1}=\cdots =\gamma_m^{n_m}=1,\;
\alpha\gamma_1 \cdots \gamma_m=\prod_{j=1}^{g_0}[a_j,b_j]\;\rangle
\end{gather*}
$\rho(\alpha)$ is determined by the covering transformation corresponding to the loop $\alpha$, which fixes $P$ and has turning angle $2\pi k/n$ at $P$ for some $k$ with $\gcd(k,n)=1$. So in $C_{2n}$, $\alpha$
represents ${\left(h^2\right)}^{a} = h^{2a}$, where $ka \equiv 1 \bmod n$. The rotation angle around $Q$ is also $2\pi k/n$. Since $h^{2n} = {\left(h^2\right)}^n = {t_{g+1}}^{\ell}$, the left-hand twisting angle of $h^2$ along the
tubular neighborhood $N$ of $C$ is $2\pi\ell/n$. This requires $2\pi k/n-(-2\pi k/n)=2\pi\ell/n$, giving $2k\equiv\ell\bmod n$. Multiplying by $a$ produces condition (iii) of a data set.

For $1\leq i\leq m$, the preimage of $x_i$ consists of $2n/n_i$ points cyclically permuted by $t$. Each of the points has stabilizer generated by $t^{2n/n_i}$. The rotation angle of $t^{2n/n_i}$ must be the same at all points of the orbit, since its action at one point is conjugate by a power of $t$ to its action at each other point. So the rotation angle at each point is of the form $2\pi k_i'/n_i$, where $\gcd(k_i',n_i)$, and as before, lifting $\gamma_i$ shows that $\displaystyle \rho(\gamma_i) = t^{(n/n_i)k_i}$ where $k_ik_i' \equiv 1\bmod n_i$. Since $C_{2n}$ is abelian, we have that $\displaystyle \rho(\prod_{j=1}^{g_0}[a_j,b_j])=1$, so
\[\displaystyle  1=\rho(\alpha\gamma_1\cdots \gamma_m)= t^{2a+(2n/n_1)k_1+ \cdots +(2n/n_m)k_m}\ ,\]
 giving condition (iv) of the data set.

The fact that the genus of the data set equals $g$ follows from the multiplicativity of the orbifold Euler characteristic for the orbifold covering $S_g \to \O$:
\begin{equation}
\label{se-eq}
\displaystyle  \frac{2-2g}{2n} = 2 - 2g_0 + \left(\frac{1}{n}-1\right) + \sum_{i=1}^m \left(\frac{1}{n_i}-1\right)
\end{equation}
Thus $h$ leads to a SE data set of exponent $\fraction{\ell}{2n}$. Finally, as in proof of Theorem~1.1 of~\cite{MK1}, the converse is a matter of reversing the argument. The arguments for establishing that an SE dataset would determine $h$ up to conjugacy and the converse are similar to the ones in the proof of Theorem~1.1. However, the part of that proof which pertains to showing that $\{P,Q\}$ is preserved by some conjugating homeomorphism is redundant in this case.
 \end{proof}
 
Theorem~\ref{thm:SE-main} allows to regard conjugacy class of a side-exchanging fractional power of exponent $\fraction{\ell}{2n}$ on $S_{g+1}$ simply as an SE data set. In the following proposition, we derive a condition under which a side-exchanging fractional power can be the power of another (side-exchanging) fractional power. 

\begin{proposition}
Let $h$ be a side-exchanging fractional power of $t_C$ of exponent $\fraction{\ell}{2n}$ such that $\ell$ is composite integer with $\gcd(\ell,n)=1$. Let $r$ be a divisor of $\ell$. Then $h = {(h')}^r$ for some side-exchanging fractional power $h'$ of~$t_C$ of exponent $\fraction{\ell'}{2n}$.
\label{coro:SE-powerofroot}
\end{proposition}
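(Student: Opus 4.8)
The plan is to imitate the proof of Proposition~\ref{coro:SP-powerofroot}: encode the conjugacy class of $h$ by an SE data set, produce a new data set of the required exponent by rescaling the residues, and then invoke Theorem~\ref{thm:SE-main}. So let $D = ((\ell,2n),g_0,a;(k_1,n_1),\ldots,(k_m,n_m))$ describe the conjugacy class of $h$, so that $\ell a \equiv 2 \bmod n$ and $2a + \sum_{i=1}^m \frac{2n}{n_i}k_i \equiv 0 \bmod 2n$. Setting $\ell' = \ell/r$, I would form the tuple $D' = ((\ell',2n),g_0,ra;(rk_1,n_1),\ldots,(rk_m,n_m))$, obtained by replacing $\ell$ with $\ell/r$ and multiplying $a$ and each $k_i$ by $r$. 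The goal is to show that $D'$ is again an SE data set of the same genus, hence (by Theorem~\ref{thm:SE-main}) represents a side-exchanging fractional power $h'$ of exponent $\fraction{\ell'}{2n}$, and that this particular $h'$ satisfies $h = (h')^r$.

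First I would verify the data-set axioms for $D'$. Since $r \mid \ell$ and $\gcd(\ell,n)=1$ we have $\gcd(r,n)=1$, so $\gcd(ra,n)=1$ and condition (ii) holds for the residue $ra$. Condition (iii) is immediate, since $\ell'(ra) = (\ell/r)(ra) = \ell a \equiv 2 \bmod n$. For condition (iv), multiplying the relation for $D$ through by $r$ yields $2(ra) + \sum_{i=1}^m \frac{2n}{n_i}(rk_i) \equiv 0 \bmod 2n$. Because the genus expression $g = n(2g_0-1) + \sum_{i=1}^m \frac{n}{n_i}(n_i-1)$ involves only $n$, $g_0$, and the $n_i$, all unchanged, $D'$ has the same genus $g$ as $D$, so $h'$ again lives in $\Mod(S_{g+1})$.

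The delicate point, and the \emph{main obstacle}, is the remaining half of condition (ii), namely $\gcd(rk_i,n_i)=1$, which reduces to $\gcd(r,n_i)=1$. Here $\gcd(r,n)=1$ only controls the odd part of $n_i$, since $n_i \mid 2n$ may carry a factor of $2$ absent from $n$; thus $\gcd(r,n_i)=1$ for every $i$ precisely when $r$ is odd. This is not an artifact of the construction: as $h'$ is side-exchanging, $r$ must be odd for $(h')^r$ to again exchange the two sides of $C$, so oddness of $r$ is exactly the parity the conclusion demands. Since $\ell$ is composite, an odd proper divisor $r$ with $\ell'=\ell/r\geq 2$ is available, so condition (i) holds and an even $r$ simply cannot occur; I would record this observation at the outset.

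Finally I would establish the identity $h = (h')^r$, rather than mere conjugacy, by the shared-orbifold argument used at the close of Proposition~\ref{coro:SP-powerofroot}. Both $D$ and $D'$ present the same quotient orbifold $\O$ with cone orders $n, n_1, \ldots, n_m$. Since $\gcd(r,2n)=1$, the element $t' = t^{r^{-1}}$ (inverse taken mod $2n$) is another generator of the same $C_{2n}$ with $(t')^r = t$, and substituting $t = (t')^r$ into $\rho(\alpha)=t^{2a}$ and $\rho(\gamma_i)=t^{(2n/n_i)k_i}$ shows that the homomorphism attached to $D'$ coincides with that of $D$; hence $h'$ is built from the very covering that produces $h$, with restriction $h_0'$ satisfying $(h_0')^r = h_0$. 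Extending over the annulus $N$, the relations $(h')^{2n} = t_C^{\ell'}$ and $h^{2n} = t_C^{\ell}$ give $((h')^r)^{2n} = t_C^{\ell' r} = t_C^{\ell} = h^{2n}$, so $(h')^r$ and $h$ have equal boundary rotations and equal twisting on $N$, whence $h = (h')^r$. Moreover, since $t=(t')^r$ exchanges $\{P,Q\}$ and $r$ is odd, $t'$ must exchange them as well, confirming that $h'$ is side-exchanging as required.
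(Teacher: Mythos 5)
Your construction of $D'$ and the closing covering-space argument reproduce the paper's proof in outline, so the overall route is the same. Where you diverge is in refusing to wave away the verification of condition (ii) for the $k_i$, and that divergence is to your credit: the paper asserts that $\gcd(\ell,n)=1$ suffices, but since the $n_i$ divide $2n$ rather than $n$, coprimality of $r$ with $n$ does not give coprimality of $r$ with an even $n_i$, and your observation that this fails exactly when $r$ is even matches the fact that an even power of a side-exchanging class is side-preserving, so the stated conclusion is impossible for even $r$ in any case. The paper's own illustrative example, $D=((6,10),0,2;(3,10),(3,10))$ with $r=2$, exhibits both failures: $rk_1=6$ is not coprime to $n_1=10$, and $(h')^2$ cannot exchange the sides of $C$. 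So the gap you isolate lies in the published proof (and, strictly, in the statement, which allows an arbitrary divisor $r$), not in your argument.

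Two corrections to your own write-up. First, rather than saying an even $r$ ``cannot occur,'' you should say that the proposition needs the additional hypotheses that $r$ be odd and $r<\ell$ (so that $\ell'\geq 2$, as condition (i) of an SE data set demands); an even divisor of $\ell$ is perfectly available as an input, and for it the conclusion is simply false. Second, your claim that a composite $\ell$ always admits an odd proper divisor with $\ell/r\geq 2$ fails when $\ell$ is a power of $2$: the exponent $\fraction{4}{10}$ data sets on $S_5$ have $\ell=4$ composite and coprime to $n=5$, yet the only odd divisor is $1$. With $r$ odd and $1<r<\ell$ fixed, the rest of your verification --- condition (iv) by multiplying the congruence by $r$, the unchanged genus, and the identification $h_0=(h_0')^r$ via the alternative generator $t'=t^{r^{-1}}$ of the same covering group --- is correct, and that last step is a sharper version of the paper's ``both have order $2n$, so $h_0$ equals some power of $h_0'$.''
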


\begin{proof}
Describe the conjugacy class of $h$ by an SE data set~$D = ((\ell,2n),g_0,a; (k_1,n_1),\ldots,(k_m,n_m))$, with $\ell a \equiv 2 \bmod n$. Consider the tuple $D'$ obtained by replacing $\ell$ with $\ell' = \ell/r$, and 
multiplying the values $a$ and $k_1,\ldots$, $k_m$ by $r$. Since $\gcd (\ell,n) = 1$, $D'$ satisfies condition (ii) of an SE data set. Also, the fact that $(l/r)\,ar \equiv \ell a \equiv 2 \bmod n$ would imply that $D'$ also satisfies condition (iii) of an SE data set. In other words, $D'$ represents a side-exchanging fractional power $h'$ of exponent $\fraction{\ell'}{2n}$, where $\ell' = \ell/r$. 

As in the proof of Theorem~\ref{thm:SE-main}, numerical data corresponding to $h$ described an orbifold $\O = S_g/C_{2n}$ and an orbifold covering
$S_g \to \O$ corresponding to the kernel of a homomorphism $\pi_1^{orb}(\O)\to C_{2n}$. The restriction $h_0$ of $h$ to a subsurface of $S_g$ was extended to an
annulus $N$, giving $h$ on $S_{g+1}$. The $h'$ above is obtained from the same orbifold $\O$ as $h$ is, but using a restriction of $h_0'$ of a different
covering transformation of $S_g$. Both have order $2n$, so $h_0$ equals some power of $h_0'$. Therefore, on $N$,  we have that $h^{2n} = t_C^{\ell} = (h')^{\ell' r}$.
\end{proof}

The following is a concrete example that illustrates Proposition~\ref{coro:SE-powerofroot}. 

\begin{example}
The SE data set $D = ((6, 10), 0, 2; (3, 10), (3, 10))$ that represents the conjugacy class of a side-exchanging fractional power $h$ of $t_C$ of exponent $\fraction{6}{10}$ in $\Mod(S_5)$, 
is the $2^{nd}$ power of a side-exchanging fractional power $h'$ of $t_C$ of exponent $\fraction{3}{10}$,  whose conjugacy class is described by the data set $D' = ((3, 10), 0, 4; (1, 10), (1, 10))$. It is quite apparent that $D'$ can be obtained from $D$ by multiplying $a = 2$, $k_1 = 3 $, and $k_1 = 3$ of $D$ by $r  = 2$ (modulo 5), and then replacing $\ell = 6$ with $\ell' = \ell/r = 3$.
\end{example}

\begin{remark}
\label{rem:SE-dataset}
It is a well-known result of W. J. Harvey~\cite{WJH} and A. Wiman~\cite{W1} that the largest order of a cyclic action on a closed orientable surface of genus $g$ is $4g+2$. For this reason, $2n \leq 4g+2$, and from our earlier assumption, $\ell \leq 4g+1$. So it is interesting to note that for $g \geq 1$, there exists a SE fractional power of $t_C$ of degree $\fraction{4g+1}{4g+2}$ in $\Mod(S_{g+1})$ and its conjugacy class is represented by the SE data set $D = ((4g+1,4g+2), 0, 2g-1; (1, 2), (2g+5, 4g+2))$.
\end{remark}

\noindent In the following corollary, we shall derive a lower bound for $2n$.

\begin{corollary}
\label{coro:SE-ellandn}
Suppose that $h$ is a SE fractional power of $t_C$ of exponent $\fraction{\ell}{2n}$ given by the SE data set $D = ((\ell,2n),g_0,a;(k_1,n_1),\ldots,(k_m,n_m))$. Then
\begin{itemize}
\item[(a)] $n$ is odd if $\ell$ is odd, and
\item[(b)] $2n \geq \frac{2g+m}{2g_0+m-1}$
\end{itemize}
\end{corollary}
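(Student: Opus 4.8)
The plan is to extract both statements directly from the defining arithmetic of the SE data set, handling (a) as a pure congruence argument and (b) as an algebraic rearrangement of the genus relation.

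For part (a), I would argue by contradiction starting from condition (iii), namely $\ell a \equiv 2 \bmod n$. Suppose $\ell$ is odd but $n$ is even. Condition (ii) requires $\gcd(a,n)=1$; since $n$ is even this forces $a$ to be odd, so the product $\ell a$ is odd. On the other hand, reducing the congruence $\ell a \equiv 2 \bmod n$ modulo $2$ (which is legitimate because $2 \mid n$) yields $\ell a \equiv 0 \bmod 2$, i.e. $\ell a$ is even. This contradiction forces $n$ to be odd whenever $\ell$ is, completing part (a). I expect this step to be entirely routine.

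For part (b), the strategy is to rewrite the genus relation so that $2n$ appears as a single factor. Starting from the orbifold Euler characteristic identity (Equation~\ref{se-eq}) and collecting terms, I would reach the clean form $\frac{2g}{2n} = 2g_0 + m - 1 - \sum_{i=1}^m \frac{1}{n_i}$. Multiplying through by $2n$ gives $2n(2g_0+m-1) = 2g + \sum_{i=1}^m \frac{2n}{n_i}$, so the desired inequality $2n(2g_0+m-1) \geq 2g+m$ is equivalent to the single estimate $\sum_{i=1}^m \frac{2n}{n_i} \geq m$. Since condition (i) says each $n_i$ divides $2n$ with $n_i > 1$, we have $n_i \leq 2n$ and hence each summand $\frac{2n}{n_i} \geq 1$; summing over the $m$ terms yields exactly $\sum_{i=1}^m \frac{2n}{n_i} \geq m$, as required.

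The one point demanding care, and the main obstacle, is the passage from $2n(2g_0+m-1) \geq 2g+m$ back to the stated form $2n \geq \frac{2g+m}{2g_0+m-1}$, which is valid only when $2g_0 + m - 1 > 0$. Here I would invoke Remark~\ref{rem:SE-m}, which rules out $m = 0$ as well as the simultaneous occurrence $g_0 = 0$ and $m = 1$, so that $m \geq 1$ and $(g_0,m) \neq (0,1)$. Splitting into the case $g_0 \geq 1$ (where $2g_0 + m - 1 \geq 2$) and the case $g_0 = 0$ (where $m \geq 2$ forces $2g_0 + m - 1 = m - 1 \geq 1$) shows $2g_0 + m - 1 \geq 1$ in every admissible situation, so dividing preserves the inequality. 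Once this positivity is secured, the estimate $\sum_{i=1}^m \frac{2n}{n_i} \geq m$ delivers the bound.
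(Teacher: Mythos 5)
Your proof is correct and follows essentially the same route as the paper: part (a) is the same parity argument from conditions (ii) and (iii), and part (b) rearranges the Euler characteristic identity (Equation~\ref{se-eq}), uses $n_i \leq 2n$ (equivalently $\tfrac{2n}{n_i}\geq 1$), and invokes Remark~\ref{rem:SE-m} to justify the positivity of $2g_0+m-1$ before dividing. If anything, your write-up is slightly cleaner, since the paper's intermediate displayed inequality has its direction reversed (it should read $-\tfrac{2g}{2n} \geq 1-2g_0+\tfrac{m}{2n}-m$), although its final conclusion is the correct one.
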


\begin{proof}
The proof of (a) follows directly from conditions (ii) and (iii) in the definition of an SE data set. For, if $n$ is even, then $\ell$ must be even since $a$ is relatively prime to $n$.
To show (b), we use Equation~\ref{se-eq} from the proof of Theorem~\ref{thm:SE-main}, which upon simplification gives
\begin{equation}
\label{eq:SE-simplified}
-\frac{g}{n} = 1 - 2g_0 + \sum_{1=i}^{m}(\frac{1}{x_i}-1)
\end{equation}
Since $x_i \leq 2n$, we have that
\[-\frac{2g}{2n} \leq 1 - 2g_0  + \frac{m}{2n} - m\ .\] Since we know by Remark~\ref{rem:SE-m} that if $m = 1$ then $g_0 > 0$, we can infer that
\[2n \geq \frac{2g+m}{2g_0+m-1}\ .\]
\end{proof}

\begin{remark}
From Remark~\ref{rem:SE-m}, we know that $m \geq 2$ whenever $g_0 = 0$. Moreover, when $g_0 = 0$, it follows then from Corollary~\ref{coro:SE-ellandn} that $2n \geq 2g + 2$. It is worth mentioning here that (for $g \geq 1$) there always exist a side-exchanging fractional power of $t_C$ exponent $\fraction{2}{2g+2}$ in $\Mod(S_{g+1})$ and its conjugacy class is given by the SE data set $D = ((2, 2g+2), 0, 1; (2g+1,2g+2),(2g+1,2g+2))$.
\end{remark}

When $g_0 = 0$ and $\ell = 2$, below are the SE data sets that represent conjugacy classes of side-exchanging essential fractional powers of $t_C$ in $\Mod(S_5)$. 

\noindent Exponent $\fraction{2}{10}$:
\begin{itemize}
\item [(i)] $D_1 = ((2, 10), 0, 1; (1, 10), (7, 10))$.
\item [(ii)] $D_2 = ((2, 10), 0, 1; (9, 10), (9, 10))$.
\end{itemize}

\noindent Exponent $\fraction{3}{10}$:
\begin{itemize}
\item[(i)] $D_1 = ((3, 10), 0, 4; (1, 10), (1, 10))$.
\item [(ii)] $D_2 = ((3, 10), 0, 4; (3, 10), (9, 10))$.
\end{itemize}

\noindent Exponent $\fraction{4}{10}$:
\begin{itemize}
\item[(i)] $D_1 = ((4, 10), 0, 3; (1, 10), (3, 10))$.
\item[(ii)] $D_2 = ((4, 10), 0, 3; (7, 10), (7, 10))$.
\end{itemize}

\noindent Exponent $\fraction{6}{10}$:
\begin{itemize}
\item[(i)] $D_1 = ((6, 10), 0, 2; (3, 10), (3, 10))$.
\item[(ii)] $D_2 = ((6, 10), 0, 2; (7, 10), (9, 10))$.
\end{itemize}

\noindent Exponent $\fraction{7}{10}$:
\begin{itemize}
\item[(i)] $D_1 = ((7, 10), 0, 1; (1, 10), (7, 10))$.
\item[(ii)] $D_2 = ((7, 10), 0, 1; (9, 10), (9, 10))$.
\end{itemize}

\noindent Exponent $\fraction{8}{10}$:
\begin{itemize}
\item[(i)] $D_1 = ((8, 10), 0, 4; (1, 10), (1, 10))$.
\item[(ii)] $D_2  = ((8,10), 0, 4; (3, 10), (9, 10))$.
\end{itemize}

\noindent Exponent $\fraction{9}{10}$:
\begin{itemize}
\item[(i)] $D_1 = ((9, 10), 0, 3; (1, 10), (3, 10))$.
\item[(ii)] $D_2 = ((9, 10), 0, 3; (7, 10), (7, 10))$.
\end{itemize}

\noindent Exponent $\fraction{2}{12}$:
\begin{itemize}
\item[(i)] $D_1 = ((2, 12), 0, 1; (1, 4), (7, 12))$.
\item[(ii)] $D_2 = ((2, 12), 0, 1; (3, 4), (1, 12))$.
\end{itemize}

\noindent Exponent $\fraction{4}{12}$:
\begin{itemize}
\item[(i)] $D_1 = ((4, 12), 0, 5; (1, 4), (11, 12))$.
\item[(ii)] $D_2 = ((4, 12), 0, 5; (3, 4), (5, 12))$.
\end{itemize}

\noindent Exponent $\fraction{8}{12}$:
\begin{itemize}
\item[(i)] $D_1 = ((8, 12), 0, 1; (1, 4), (7, 12))$.
\item[(ii)] $D_2 = ((8, 12), 0, 1; (3, 4), (1, 12))$.
\end{itemize}

\noindent Exponent $\fraction{10}{12}$:
\begin{itemize}
\item[(i)] $D_1 = ((10, 12), 0, 5; (1, 4), (11, 12))$.
\item[(ii)] $D_2 = ((10, 12), 0, 5; (3, 4), (5, 12))$.
\end{itemize}

\noindent Exponent $\fraction{2}{18}$:
\begin{itemize}
\item[(i)] $D_1 = ((2, 18), 0, 1; (1, 2), (7, 18))$.
\end{itemize}

\noindent Exponent $\fraction{4}{18}$:
\begin{itemize}
\item[(i)] $D_1 = ((4, 18), 0, 5; (1, 2), (17, 18))$.
\end{itemize}

\noindent Exponent $\fraction{5}{18}$:
\begin{itemize}
\item[(i)] $D_1 = ((5, 18), 0, 4; (1, 2), (1, 18))$.
\end{itemize}

\noindent Exponent $\fraction{7}{18}$:
\begin{itemize}
\item[(i)] $D_1 = ((7, 18), 0, 8; (1, 2), (11, 18))$.
\end{itemize}

\noindent Exponent $\fraction{13}{18}$:
\begin{itemize}
\item[(i)] $D_1 = ((8, 18), 0, 7; (1, 2), (13, 18))$.
\end{itemize}

\noindent Exponent $\fraction{10}{18}$:
\begin{itemize}
\item[(i)] $D_1 = ((10, 18), 0, 2; (1, 2), (5, 18))$.
\end{itemize}

\noindent Exponent $\fraction{11}{18}$:
\begin{itemize}
\item[(i)] $D_1 = ((11, 18), 0, 1; (1, 2), (7, 18))$.
\end{itemize}

\noindent Exponent $\fraction{13}{18}$:
\begin{itemize}
\item[(i)] $D_1 = ((13, 18), 0, 5; (1, 2), (17, 18))$.
\end{itemize}

\noindent Exponent $\fraction{14}{18}$:
\begin{itemize}
\item[(i)] $D_1 = ((14, 18), 0, 4; (1, 2), (1, 18))$.
\end{itemize}

\noindent Exponent $\fraction{11}{18}$:
\begin{itemize}
\item[(i)] $D_1 = ((16, 18), 0, 8; (1, 2), (11, 18))$.
\end{itemize}

\noindent Exponent $\fraction{17}{18}$:
\begin{itemize}
\item[(i)] $D_1 = ((17,18), 0, 7; (1, 2), (13, 18))$.
\end{itemize}

\section{The occurrence of side-exchanging and side-preserving fractional powers}
\label{sec:SP-SEcompare}

In this section, we shall make a general comparison between the occurrences of side-exchanging and side-preserving essential fractional powers of $t_C$ and their exponents in  $\Mod(S_{g+1})$. We will use following notation. 

\begin{notation}
We will denote the number of distinct exponents of side-exchanging and side-preserving essential fractional powers of $t_C$ in $\Mod(S_{g+1})$, respectively,  by $E_{SE}(g)$ and $E_{SP}(g)$. Also, we will denote total number of side-exchanging and side-preserving essential fractional powers of $t_C$ (up to conjugacy) in $\Mod(S_{g+1})$ , respectively,  by $N_{SE}(g)$ and $N_{SP}(g)$.
\end{notation}

For $20 \leq g + 1 \leq 30$, Table 1 gives $E_{SP}(g)$, $E_{SE}(g)$, $N_{SP}(g)$, and $N_{SE}(g)$.
\begin{table}
\small
\begin{center}
    \begin{tabular}{ | c | c | c | c | c | c |}
    \hline
    $g+1$ & $E_{SP}(g)$ & $E_{SE}(g)$ & $N_{SP}(g)$ & $N_{SE}(g)$ \\ \hline
     20 & $35$ & 102 & 236 & 322\\ \hline
     21 & $77$ & 102 & 1034 & 148\\ \hline
    22 & $75$ & 103 & 1284 & 283\\ \hline
    23 & $57$ & 188 & 468 & 906\\ \hline
    24 & $57$ & 99 & 1142 & 171\\ \hline
    25 & $111$ & 134 & 1498 & 491\\ \hline
    26 & $59$ & 154 & 628 & 625\\ \hline
    27& $83$ & 193 & 1610 & 349\\ \hline
    28 & $85$ & 146 & 1208 & 414\\ \hline
    29 & $89$ & 178 & 930 & 1009\\ \hline
    30 & $69$ & 178 & 1770 & 226\\ \hline
   \end{tabular}
\end{center}
\caption{This data illustrates that, in general, $E_{SP}(g)  < E_{SE}(g)$, while $N_{SP}(g) > N_{SE}(g)$. The data seems to indicate that though the side-exchanging fractional posers achieve more exponents, they are in general fewer in number when compared with side-preserving fractional powers }
\end{table}

%These computations were made using GAP software~\cite{SESOFTWARE}.

\bibliographystyle{amsplain}
\bibliography{frac-root}
\end{document}